\newtheorem{theorem}{Theorem}
\newtheorem{corollary}{Corollary}
\newtheorem{definition}{Definition}[section]
\newtheorem{lemma}{Lemma}
\numberwithin{equation}{section}
\title{A new metric associated with the domain boundary}
\author{Xingchen Song} 
\address{Xingchen Song: School of Science, Zhejiang Sci-Tech University, Hangzhou 310018, China}
\email{202120102051@mails.zstu.edu.cn}
\author{Gendi Wang*} 
\address{*Gendi Wang ( Corresponding author ): School of Science, Zhejiang Sci-Tech University, Hangzhou 310018, China}
\email{gendi.wang@zstu.edu.cn}
\begin{document}  

\newcounter{minutes}\setcounter{minutes}{\time}
\divide\time by 60
\newcounter{hours}\setcounter{hours}{\time}
\multiply\time by 60 \addtocounter{minutes}{-\time}
\def\thefootnote{}
\footnotetext{ {\tiny File:~\jobname.tex,
          printed: \number\year-\number\month-\number\day,
          \thehours.\ifnum\theminutes<10{0}\fi\theminutes }}
\makeatletter\def\thefootnote{\@arabic\c@footnote}\makeatother

\maketitle

\begin{abstract}
In this paper, we introduce a new metric $\tilde{c}$ which is associated with the domain boundary for a Ptolemy space $(X,d)$.
Moreover, we study the inclusion relation of the $\tilde{c}$ metric balls and some related hyperbolic type metric balls in subdomains of $\mathbb{R}^n$.
In addition,  we study distortion properties of M\"obius transformations with respect to the $\tilde{c}$ metric in the unit ball
and the quasiconformality of bilipschitz mappings in $\tilde{c}$ metric.
\end{abstract}

{\small \sc Keywords.} {$\tilde{c}$ metric; metric ball; ball inclusion; M\"obius transformation; bilipschitz map}

{\small \sc 2020 Mathematics Subject Classification.} {30F45 (51E05)}

\section{Introduction}

Hyperbolic metrics play an important role in driving the development of geometric function theory.
However, hyperbolic metrics have limitations in high-dimensional spaces.
As a result, hyperbolic type metrics are introduced as alternatives to hyperbolic metrics.
The most familiar hyperbolic type metrics include the quasihyperbolic metric, the distance ratio metric, the Cassinian metric, and  the triangular ratio metric, etc.
In hyperbolic type metrics, various metric function families defined in relation to the domain boundary are introduced,
and their geometric and analytical properties are studied \cite{jx,wyp,xxx,jw10,jw11,jx10,jw}.

In 2002, H{\"a}st{\"o} \cite{jw7} introduced the generalized relative metric named as the $M$-relative metric.
Let $D\subsetneq \mathbb{R}^n$, and $M(x,y)$ be continuous on $(0,\infty)\times (0,\infty)$, and $\rho_M(x,y)=\frac{|x-y|}{M(|x|,|y|)}$ be a metric.
Then for all $x,y\in D$,
\begin{equation*}
\rho_{M,D}(x,y)=\underset{p\in\partial D}{\sup}\frac{|x-y|}{M(|x-p|,|y-p|)}
\end{equation*}
is also a metric.
When $M(x,y)=x+y$, the corresponding metric is the triangular ratio metric
\begin{equation*}
s_D(x,y)=\underset{p\in\partial D}{\sup} \frac{|x-y|}{|x-p|+ |y-p|}.
\end{equation*}
The triangular ratio metric has been recently investigated in \cite{jw4,jw9,jw10,jw11}.

In this paper, we define a new hyperbolic type metric $\tilde{c}_D(x,y)$ associated with the domain boundary $\partial D$:
let $(X,d)$ be a Ptolemy space, $D\subsetneq X$ be an open set with $\partial D \ne\emptyset$,
then for all $x,y\in D$,
\begin{equation*}
\tilde{c}_D(x,y)=\underset{p\in\partial D}{\sup} \frac{d(x,y)}{\max\{d(x,p), d(y,p)\}}\,.
\end{equation*}
For the proof see Theorem \ref{mainthm} in Section 3.

We investigate the inclusion relation of the $\tilde{c}$ metric balls in subdomains of $\mathbb{R}^n$.
Additionally, we study the distortion properties of the $\tilde{c}$ metric under M\"obius transformations on the unit ball.
We have known that mappings satisfying the bilipschitz condition with respect to the triangular ratio metric is quasiconformal.
Due to certain similarities between the triangular ratio metric and the $\tilde{c}$ metric,
we prove the quasiconformality of bilipschitz mappings in $\tilde{c}$ metric as well.

\bigskip
\section{Hyperbolic Type Metrics}

In this section, we review the definitions of some related hyperbolic type metrics.
Let $X$ be a metric space and $D\subsetneq X$ be an open set with $\partial D \ne\emptyset$.
Let $d(x)=d(x,\partial D)$ be the distance from the point $x$ to the boundary of $D$,
and $ \min\{d(x,p),d(y,p)\}=d(x,p)\land d(y,p), \, \max\{d(x,p),d(y,p)\}=d(x,p)\vee d(y,p),\, {d_{min}}=d(x)\land d(y),\, d_{max}=d(x)\vee d(y).$

\subsection{The hyperbolic metric}

The hyperbolic metrics $\rho_{\mathbb{H}^n}$ and $\rho_{\mathbb{B}^n}$
of the upper half space $\mathbb{H}^n=\{(x_1,...,x_n)\in \mathbb{R}^n:x_n>0\}$ and of the unit ball $ \mathbb{B}^n=\{z\in \mathbb{R}^n:|z|<1\}$ are, respectively, defined
as follows \cite{jx4}: for all $x, y \in {\mathbb{H}^n}$
\begin{equation*}
\cosh\frac{\rho_{\mathbb{H}^n}(x,y)}{2}=1+\frac{{|x-y|}^2}{2x_ny_n},
\end{equation*}
and for all $x, y \in {\mathbb{B}^n}$
\begin{equation*}
\sinh\frac{\rho_{\mathbb{B}^n}(x,y)}{2}=\frac{|x-y|}{\sqrt{(1-{|x|}^2)(1-{|y|}^2)}}.
\end{equation*}

\medskip
\subsection{The distance ratio metric}

Gehring and Osgood first introduced the original definition of distance ratio metric \cite{gehring1979uniform}.
The following form was proposed by Vourinen in \cite{jw20}.
For all $x,y\in D\subset\mathbb{R}^n$, the distance ratio metric is defined as
\begin{equation*}
j_D(x,y)=\log\left(1+ \frac{|x-y|}{d_{min}}\right).
\end{equation*}

It follows from \cite{jx9} and \cite{jw20} that for all $x,y\in D\in \{\mathbb{B}^n, \mathbb{H}^n\}$,
\begin{equation}\label{jrhoB}
j_D(x,y)\leq \rho_D(x,y)\leq 2j_D(x,y).
\end{equation}

\medskip
\subsection{The quasihyperbolic metric}

For all $x,y\in D\subsetneq\mathbb{R}^n$, the quasihyperbolic metric is defined as \cite{jw6}
\begin{equation*}
k_D(x,y)=\underset{\gamma\in\Gamma(x,y)}{\inf} \int_{\gamma}{\frac{|dz|}{d(z)}},
\end{equation*}
where $\Gamma(x,y)$ represents the set of rectifiable arcs in $D$ connecting points $x$ and $y$.

It follows from \cite{jw6} that for all $x, y\in D\subsetneq\mathbb{R}^n$,
\begin{equation}\label{jk}
j_D(x,y)\leq k_D(x,y).
\end{equation}

\medskip
\subsection{\boldmath{The Barrlund metric}}

For $q\geq 1$ and all $x,y\in D\subset\mathbb{R}^n$, the Barrlund metric is defined as \cite{bDp}
\begin{equation*}
b_{D,q}(x,y)=\underset{p\in \partial D}\sup\frac{|x-y|}{\left(|x-p|^q+|y-p|^q\right)^{1/q}}.
\end{equation*}
When $q=1$, the Barrlund metric is a special case known as the triangular ratio metric
\begin{equation*}
b_{D,1}(x,y)=s_D(x,y)=\underset{p\in\partial D}{\sup} \frac{|x-y|}{|x-p|+ |y-p|}.
\end{equation*}

\medskip
\subsection{The Cassinian metric}

Ibragimov introduced the Cassinian metric and studied its geometric properties \cite{jx10}.
For all $x,y\in D\subset\overline{\mathbb{R}^n}$, the Cassinian metric is defined as
\begin{equation*}
c_D(x,y)=\underset{p\in\partial D}{\sup} \frac{|x-y|}{|x-p||y-p|}.
\end{equation*}

It follows from \cite{jw22} that for all $x, y\in D\subsetneq\mathbb{R}^n$,
\begin{equation*}
\sinh\frac{\rho_{\mathbb{B}^n}(x,y)}{2}\leq c_{\mathbb{B}^n}(x,y).
\end{equation*}

\medskip
\subsection{The $h_{D,c}$ metric}

The function $h_{D,c}$ was introduced in \cite{dovgoshey2016comparison}.
For all $x,y\in D\subsetneq\mathbb{R}^n$,
\begin{equation*}
h_{D,c}(x,y)=\log\left(1+c\frac{|x-y|}{\sqrt{d(x)d(y)}}\right).
\end{equation*}
When $c\geq 2$, the function is a metric.
Throughout this paper, we assume that the constant $c\geq 2$.

It follows from \cite{dovgoshey2016comparison} that if $c\geq 2$, then for all $x,y\in D\in \{\mathbb{B}^n, \mathbb{H}^n\}$,
\begin{equation*}
\frac{1}{c}h_{D,c}(x,y)\leq \rho_D(x,y) \leq 2h_{D,c}(x,y).
\end{equation*}

\medskip
\subsection{The $t$ metric}

Rainio and Vuorinen introduced the $t$ metric and studied its geometric properties \cite{t}.
Let $D$ be an open set, for all $x,y\in D\subsetneq\mathbb{R}^n$ the $t$ metric is defined as
\begin{equation*}
t_D(x,y)=\frac{|x-y|}{|x-y|+d(x)+d(y)}.
\end{equation*}

\bigskip

\section{The $\tilde{c}$ metric}

\begin{definition}{\rm{\cite{jw7}}}
Let $(X,d)$ be a metric space.
If for all $x, y, z, w\in X$, there holds
\begin{equation}\label{ptolemy}
d(x, y)d(z, w)\leq d(x, z)d(y, w)+d(x, w)d(y, z),
\end{equation}
then we call $(X,d)$ is a Ptolemy space and inequality {\rm{(\ref{ptolemy})}} is the Ptolemy inequality.
\end{definition}

\medskip

\begin{theorem}\label{mainthm}
Let $(X,d)$ be a metric space, and let $D\subsetneq X$ be an open set with $\partial D \ne\emptyset$.
Then for all $x, y\in D$
 \begin{equation}
 \tilde{c}_D(x,y)=\underset{p\in\partial D}{\sup} \frac{d(x,y)}{d(x,p)\vee d(y,p)} \nonumber
  \end{equation}
 is a metric.
\end{theorem}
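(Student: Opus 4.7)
The plan is to verify the four metric axioms for $\tilde{c}_D$, the first three being essentially cosmetic and the triangle inequality being the only place where the Ptolemy hypothesis enters. Since $D$ is open, $d(x,p)\vee d(y,p)>0$ for every $x,y\in D$ and $p\in\partial D$, so each ratio is defined; symmetry is immediate from the symmetry of $\vee$; and $\tilde{c}_D(x,y)=0$ forces $d(x,y)=0$, i.e.\ $x=y$. Finiteness of the supremum, which is needed to call $\tilde{c}_D$ a metric, follows from the triangle inequality for $d$: $d(x,p)\vee d(y,p)\ge\tfrac12(d(x,p)+d(y,p))\ge\tfrac12 d(x,y)$, giving $\tilde{c}_D(x,y)\le 2$.

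For the triangle inequality, I would fix $x,y,z\in D$ and a boundary point $p\in\partial D$ and prove the pointwise estimate
\begin{equation*}
\frac{d(x,y)}{d(x,p)\vee d(y,p)}\;\le\;\tilde{c}_D(x,z)+\tilde{c}_D(z,y);
\end{equation*}
taking the supremum over $p$ on the left then gives $\tilde{c}_D(x,y)\le\tilde{c}_D(x,z)+\tilde{c}_D(z,y)$. Writing $\alpha=d(x,p)$, $\beta=d(y,p)$, $\gamma=d(z,p)$ and assuming $\alpha\le\beta$ by the $x\leftrightarrow y$ symmetry of the problem, the proof splits according to whether $\gamma\le\beta$ or $\gamma>\beta$. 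In the first case ($\gamma\le\beta$) the ordinary triangle inequality for $d$ suffices: dividing $d(x,y)\le d(x,z)+d(z,y)$ by $\beta$ and weakening each denominator using $\alpha\vee\gamma\le\beta$ yields the required bound.

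The substantive case is $\gamma>\beta$, where the plain triangle inequality is too weak, because now both denominators $\alpha\vee\gamma$ and $\beta\vee\gamma$ equal $\gamma>\beta$; I will instead invoke the Ptolemy inequality \eqref{ptolemy} applied to the four points $x,y,z,p$ in the pairing that gives $d(x,y)\gamma\le d(x,z)\beta+\alpha\,d(y,z)$. Dividing by $\gamma\beta$ and using $\alpha\le\beta$ on the last term produces $\tfrac{d(x,y)}{\beta}\le\tfrac{d(x,z)}{\gamma}+\tfrac{d(y,z)}{\gamma}$, which is exactly the desired inequality in this regime. This is the only step where the Ptolemy assumption is actually used, and I expect it to be the main obstacle: without Ptolemy, a boundary point $p$ for which $d(z,p)$ strictly dominates $d(x,p)$ and $d(y,p)$ would destroy the bound, and the candidate "metric" $\tilde{c}_D$ would fail the triangle inequality on a general metric space.
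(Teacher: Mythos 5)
Your proposal is correct and follows essentially the same route as the paper: reduce to a pointwise estimate for a fixed boundary point $p$, apply the Ptolemy inequality in the pairing $d(x,y)d(z,p)\le d(x,z)d(y,p)+d(x,p)d(y,z)$ precisely when $d(z,p)$ dominates, use the plain triangle inequality otherwise, and then take suprema over $p$. The only differences are cosmetic — you merge the paper's two non-Ptolemy cases into one and add a welcome remark on the finiteness bound $\tilde{c}_D\le 2$, which the paper omits.
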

\begin{proof}
Obviously, we have $\tilde{c}_D(x,y)=\tilde{c}_D(y,x)$, $\tilde{c}_D(x,y)\geq0$,
and $\tilde{c}_D(x,y)=0$ if and only if $x=y$.
It is only necessary to prove the triangle inequality.

\medskip

First, we prove that $\tilde{c}_D(x,y)\leq \tilde{c}_D(x,z)+\tilde{c}_D(y,z)$ holds for any $p\in X$ and $D=X\backslash \{p\}$.
Without loss of generality, we may assume that $d(x,p)\leq d(y,p)$.

\textbf{Case 1.} $d(y,p)\leq d(z,p)$. Then the triangle inequality  is equivalent to
 \begin{equation}
 \dfrac{d(x,y)}{d(y,p)}\leq\dfrac{d(x,z)}{d(z,p)}+\dfrac{d(y,z)}{d(z,p)}. \nonumber
 \end{equation}
By the Ptolemy inequality
  \begin{align}
 d(x,y)d(z,p)&\leq d(x,z)d(y,p)+d(x,p)d(y,z), \nonumber
 \end{align}
we obtain
\begin{align}
 \dfrac{d(x,y)}{d(y,p)}\leq  \dfrac{d(x,z)}{d(z,p)}+ \dfrac{d(x,p)d(y,z)}{d(y,p)d(z,p)}\leq\dfrac{d(x,z)}{d(z,p)}+ \dfrac{d(y,p)d(y,z)}{d(y,p)d(z,p)}
=\dfrac{d(x,z)}{d(z,p)}+\dfrac{d(y,z)}{d(z,p)}.\nonumber
\end{align}


\textbf{Case 2.} $d(x,p)\leq d(z,p)\leq d(y,p)$. Then the triangle inequality is equivalent to
\begin{equation}
 \dfrac{d(x,y)}{d(y,p)}\leq \dfrac{d(x,z)}{d(z,p)}+\dfrac{d(y,z)}{d(y,p)}. \nonumber
 \end{equation}
By the triangle inequality we easily obtain that
\begin{equation}
\dfrac{d(x,y)}{d(y,p)}\leq \dfrac{d(x,z)}{d(y,p)}+\dfrac{d(y,z)}{d(y,p)}\leq \dfrac{d(x,z)}{d(z,p)}+\dfrac{d(y,z)}{d(y,p)}. \nonumber
 \end{equation}


\textbf{Case 3.} $d(z,p)\leq d(x,p)$. Then the triangle inequality is equivalent to
 \begin{equation}
 \dfrac{d(x,y)}{d(y,p)}\leq \dfrac{d(x,z)}{d(x,p)}+\dfrac{d(y,z)}{d(y,p)}. \nonumber
 \end{equation}
By the triangle inequality we easily obtain that
\begin{equation}
\dfrac{d(x,y)}{d(y,p)}\leq \dfrac{d(x,z)}{d(y,p)}+\dfrac{d(y,z)}{d(y,p)}\leq \dfrac{d(x,z)}{d(x,p)}+\dfrac{d(y,z)}{d(y,p)}. \nonumber
 \end{equation}

From \textbf{Case 1-3} we obtain that for any $p\in X$, $x, y, z\in X\backslash \{p\} $, there holds
  \begin{equation*}
  \tilde{c}_{X\backslash \{p\}}(x,y)\leq \tilde{c}_{X\backslash \{p\}}(x,z)+\tilde{c}_{X\backslash \{p\}}(y,z).
  \end{equation*}

\medskip
Next we prove for any open set $D\subsetneq X$ with $\partial D \ne\emptyset$, the triangle inequality still holds.

For any $p\in \partial D$, $x, y, z\in D$, we have $p\in X$, $x, y, z\in X\backslash \{p\}$.
Then
\begin{align}
 \frac{d(x,y)}{d(x,p)\vee d(y,p)}&\leq \frac{d(x,z)}{d(x,p)\vee d(z,p)}+\frac{d(y,z)}{d(y,p)\vee d(z,p)}\nonumber\\
 &\leq
 \underset{p\in\partial D}{\sup}\frac{d(x,z)}{d(x,p)\vee d(z,p)}+\underset{p\in\partial D}{\sup} \frac{d(y,z)}{d(y,p)\vee d(z,p)}.\nonumber
\end{align}
Due to the arbitrariness of $p$, we have
 \begin{equation}
\underset{p\in\partial D}{\sup}\frac{d(x,y)}{d(x,p)\vee d(y,p)}\leq \underset{p\in\partial D}{\sup} \frac{d(x,z)}{d(x,p)\vee d(z,p)}+\underset{p\in\partial D}{\sup} \frac{d(y,z)}{d(y,p)\vee d(z,p)},\nonumber\\
 \end{equation}
namely
 \begin{equation*}
 \tilde{c}_D(x,y)\leq \tilde{c}_D(x,z)+\tilde{c}_D(y,z).
 \end{equation*}

\medskip

In conclusion, $\tilde{c}_D(x,y)$ is a metric.
\end{proof}

\medskip

Since $\mathbb{R}^n$ is a Ptolemy space, the following corollary can be easily obtained.
\begin{corollary}
Let $D\subsetneq \mathbb{R}^n$ be an open set with $\partial D \ne\emptyset$.
Then for all $x,y\in D$,
\begin{equation*}
\tilde{c}_D(x,y)=\underset{p\in\partial D}{\sup} \frac{|x-y|}{|x-p|\vee|y-p|}
\end{equation*}
is a metric.
\end{corollary}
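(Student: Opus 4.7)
The statement to prove is that $\tilde{c}_D$ satisfies the axioms of a metric. Symmetry is built into the definition, and nonnegativity is immediate. Positive definiteness follows because if $x \neq y$ then $d(x,y) > 0$, while for any boundary point $p \in \partial D$ we have $p \neq x$ and $p \neq y$, so $d(x,p) \vee d(y,p)$ is finite and nonzero. All the real content lies in the triangle inequality, so my proposal concentrates there.

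The natural strategy is a two-step reduction. First I would prove the triangle inequality in the special case where $D = X \setminus \{p\}$ is a single-point-deleted space; in that case the supremum disappears and $\tilde{c}_D(x,y) = d(x,y)/(d(x,p) \vee d(y,p))$. Once that pointwise-$p$ inequality
\[
\frac{d(x,y)}{d(x,p)\vee d(y,p)} \leq \frac{d(x,z)}{d(x,p)\vee d(z,p)} + \frac{d(y,z)}{d(y,p)\vee d(z,p)}
\]
is established for every $p \in X$, the general case follows by passing to the supremum over $p \in \partial D \subset X$: the left side is bounded above by the right side for each $p$, and the two summands on the right are each bounded by their own suprema, giving $\tilde{c}_D(x,y) \leq \tilde{c}_D(x,z) + \tilde{c}_D(y,z)$ after taking sup on the left.

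The core of the argument is the single-point inequality above, and here I would exploit symmetry to assume without loss of generality that $d(x,p) \leq d(y,p)$, so the left denominator is $d(y,p)$. Then I would split on the position of $d(z,p)$ relative to $d(x,p)$ and $d(y,p)$, giving three cases. In the subcases $d(x,p) \leq d(z,p) \leq d(y,p)$ and $d(z,p) \leq d(x,p)$, the ordinary triangle inequality for $d$ suffices: the right-hand denominators are no larger than $d(y,p)$, so bounding $d(x,y) \leq d(x,z) + d(y,z)$ and dividing by favorable denominators closes the inequality. The main obstacle is the remaining case $d(z,p) \geq d(y,p)$, where the right-hand denominators are both $d(z,p)$ and the naive triangle inequality is not strong enough. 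This is exactly where the Ptolemy hypothesis enters: applying $d(x,y)d(z,p) \leq d(x,z)d(y,p) + d(x,p)d(y,z)$, dividing through by $d(y,p) d(z,p)$, and then using $d(x,p) \leq d(y,p)$ to dominate the second term by $d(y,z)/d(z,p)$ yields the desired bound.

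Beyond that case, the proof is essentially bookkeeping; no further geometric input is needed. The ``main obstacle'' is genuinely only the organization of the case split and recognition that Ptolemy handles precisely the one case where standard metric tools fail. I would therefore present the proof as: (i) verify the trivial axioms, (ii) establish the inequality for $D = X \setminus \{p\}$ via the three-case analysis with Ptolemy used in Case~1, and (iii) take suprema to conclude for general $D$.
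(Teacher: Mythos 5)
Your proposal is correct and follows essentially the same route as the paper: it reduces to the single-boundary-point case, splits into the same three cases according to the position of $d(z,p)$ relative to $d(x,p)\leq d(y,p)$, invokes the Ptolemy inequality (valid in $\mathbb{R}^n$) precisely in the case $d(z,p)\geq d(y,p)$, and concludes for general $D$ by taking suprema over $p\in\partial D$. No gaps.
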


Unless otherwise specified, we always assume that $X=\mathbb{R}^n$, $D$ is an open proper set of $\mathbb{R}^n$, and $\partial D \ne\emptyset$ in the sequel.


We will prove several inequalities that are commonly used in the following context.
\textbf{Lemma \ref{lemmavee}(2)} has already been proven in the proof of Theorem 3.1  in \cite{jw}, but  was not presented as an independent result. For the sake of completeness, we include its proof here.

\medskip
\begin{lemma}\label{lemmavee}
For $q\geq 1$ and all $x,y\in D\subsetneq \mathbb{R}^n$, we have

(1)  $d_{min}\leq\underset{p\in \partial D}\inf(|x-p|\vee|y-p|)\leq |x-y|+d_{min};$

(2)  $d(x)d(y)\leq \underset{p\in \partial D}\inf(|x-p||y-p|) \leq d_{min}(d_{min}+|x-y|);$

(3)  $2^{1/q}d_{min}\leq \underset{p\in \partial D}\inf\left(|x-p|^q+|y-p|^q\right)^{1/q}\leq2^{1/q}(|x-y|+d_{min}).$
\end{lemma}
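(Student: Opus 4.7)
The plan is to establish each bound in a uniform way. The lower bounds follow immediately from the defining property $d(z)=\inf_{p\in\partial D}|z-p|$ applied at $z=x$ and $z=y$. The upper bounds are obtained by selecting an almost-minimizing boundary point $p_\varepsilon$ near whichever of $x,y$ realizes $d_{min}$ and then propagating the estimate to the other point through the ordinary triangle inequality.

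For the lower bounds, fix any $p\in\partial D$. Then $|x-p|\geq d(x)$ and $|y-p|\geq d(y)$, so
\begin{align*}
|x-p|\vee|y-p| &\geq d(x)\vee d(y)=d_{max}\geq d_{min},\\
|x-p||y-p| &\geq d(x)d(y),\\
|x-p|^{q}+|y-p|^{q} &\geq d(x)^{q}+d(y)^{q}\geq 2\,d_{min}^{q}.
\end{align*}
Taking the infimum over $p\in\partial D$ (and, in the third line, the $q$-th root) yields the left-hand inequalities in (1), (2) and (3).

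For the upper bounds, assume without loss of generality that $d(x)=d_{min}$. Given $\varepsilon>0$, choose $p_\varepsilon\in\partial D$ with $|x-p_\varepsilon|<d_{min}+\varepsilon$; the triangle inequality then gives $|y-p_\varepsilon|\leq|x-y|+|x-p_\varepsilon|<|x-y|+d_{min}+\varepsilon$. Substituting into the three expressions yields
\begin{align*}
|x-p_\varepsilon|\vee|y-p_\varepsilon| &< |x-y|+d_{min}+\varepsilon,\\
|x-p_\varepsilon||y-p_\varepsilon| &< (d_{min}+\varepsilon)\bigl(|x-y|+d_{min}+\varepsilon\bigr),\\
\bigl(|x-p_\varepsilon|^{q}+|y-p_\varepsilon|^{q}\bigr)^{1/q} &\leq 2^{1/q}\bigl(|x-y|+d_{min}+\varepsilon\bigr),
\end{align*}
where the third line uses $a^{q}+b^{q}\leq 2(a\vee b)^{q}$. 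Letting $\varepsilon\to 0$ in each line gives the three right-hand inequalities.

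There is no conceptual obstacle; the only point requiring care is that $d(x)$ need not be attained on $\partial D$, which is why I pass to an $\varepsilon$-minimizing sequence of boundary points rather than a single extremal one, and that the witness $p_\varepsilon$ must be chosen near the point of $\{x,y\}$ achieving $d_{min}$ (otherwise the triangle-inequality step produces a larger factor than $|x-y|+d_{min}$). The Ptolemy structure is not needed here; only the metric axioms of $\mathbb{R}^{n}$ are used.
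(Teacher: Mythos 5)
Your proof is correct and follows essentially the same route as the paper: lower bounds from $|x-p|\geq d(x)$, $|y-p|\geq d(y)$ pointwise, and upper bounds by combining the triangle inequality with a boundary point that nearly realizes $d_{min}$. The only difference is cosmetic — you make the non-attainment issue explicit via an $\varepsilon$-minimizing witness $p_\varepsilon$, where the paper simply takes infima of the pointwise inequalities "due to the arbitrariness of $p$"; both are valid.
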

\begin{proof}

(1) For all $x,y\in D$, $p\in \partial D$, by the triangle inequality we easily obtain that
\begin{equation*}
d_{min}\leq |x-p|\land|y-p|\leq |x-p|\vee|y-p|\leq |x-y|+|x-p|\land|y-p|.
\end{equation*}
Due to the arbitrariness of $p$, we have
\begin{equation}
 d_{min}\leq\underset{p\in \partial D}\inf(|x-p|\vee|y-p|)\leq |x-y|+d_{min}\nonumber.
\end{equation}

(2) For all $x,y\in D$, $p\in \partial D$, by the triangle inequality we easily obtain that
\begin{align*}
|x-p||y-p|&\leq |x-p|(|x-p|+|x-y|),\\
|x-p||y-p|&\leq |y-p|(|y-p|+|x-y|),
\end{align*}
and
\begin{equation*}
|x-p||y-p|\geq d(x)d(y).
\end{equation*}
Due to the arbitrariness of $p$, we have
\begin{equation*}
d(x)d(y)\leq \underset{p\in \partial D}\inf(|x-p||y-p|)
\leq d_{min}(d_{min}+|x-y|).
\end{equation*}

(3) For all $x,y\in D$, $p\in \partial D$, it is easy to see that
\begin{gather}
|x-p|^q+|y-p|^q\leq 2(|x-p|\vee |y-p|)^q\nonumber ,\\
|x-p|^q+|y-p|^q\geq 2(|x-p|\land |y-p|)^q\nonumber.
\end{gather}
Then
\begin{align}
\left(|x-p|^q+|y-p|^q\right)^{1/q}&\leq 2^{1/q}\left(|x-p|\vee|y-p|\right)\leq 2^{1/q}\left(|x-y|+|x-p|\land|y-p|\right),\nonumber\\
\left(|x-p|^q+|y-p|^q\right)^{1/q}&\geq 2^{1/q}\left(|x-p|\land|y-p|\right)\geq 2^{1/q}d_{min}.\nonumber
\end{align}
Due to the arbitrariness of $p$, we have
\begin{equation*}
2^{1/q}d_{min}\leq\underset{p\in \partial D}\inf\left(|x-p|^q+|y-p|^q\right)^{1/q}\leq2^{1/q}(|x-y|+d_{min})\label{b1}\\.
\end{equation*}

\end{proof}

By \textbf{lemma \ref{lemmavee}}, we immediately obtain the following lemma.
\begin{lemma}\label{ctcb}
For all $x,y\in D\subsetneq \mathbb{R}^n$, \\
(1)  $\frac{|x-y|}{|x-y|+d_{min}}\leq \tilde{c}_D(x,y) \leq \frac{|x-y|}{d_{min}};$\\
(2)  $\frac{|x-y|}{d_{min}(d_{min}+|x-y|)}\leq c_D(x,y)\leq \frac{|x-y|}{d(x)d(y)};$\\
(3)  $\frac{|x-y|}{2^{1/q}(|x-y|+d_{min})}\leq b_{D,q}(x,y) \leq \frac{|x-y|}{2^{1/q}d_{min}}.$
\end{lemma}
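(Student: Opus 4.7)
The proof proposal is essentially a one-line reduction to Lemma \ref{lemmavee}, and the remark just above the statement already signals as much. The key observation is that each of the three metrics appearing in the lemma has the form
\begin{equation*}
F_D(x,y) \;=\; \underset{p\in\partial D}{\sup}\,\frac{|x-y|}{N(x,y,p)},
\end{equation*}
where $N(x,y,p)$ is one of the three nonnegative quantities $|x-p|\vee|y-p|$, $|x-p||y-p|$, or $(|x-p|^q+|y-p|^q)^{1/q}$. Since the numerator $|x-y|$ is independent of $p$ and the denominator is positive, the supremum over $p\in\partial D$ commutes with taking reciprocals, so
\begin{equation*}
F_D(x,y) \;=\; \frac{|x-y|}{\underset{p\in\partial D}{\inf}\,N(x,y,p)}.
\end{equation*}

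With this rewriting in hand, the plan is to apply the three parts of Lemma \ref{lemmavee} verbatim. For (1), the bounds $d_{min}\leq \inf_{p\in\partial D}(|x-p|\vee|y-p|)\leq |x-y|+d_{min}$ from Lemma \ref{lemmavee}(1) invert to $\frac{1}{|x-y|+d_{min}}\leq \frac{1}{\inf_{p}(|x-p|\vee|y-p|)}\leq \frac{1}{d_{min}}$, and multiplying through by $|x-y|$ gives exactly
\begin{equation*}
\frac{|x-y|}{|x-y|+d_{min}}\leq \tilde{c}_D(x,y) \leq \frac{|x-y|}{d_{min}}.
\end{equation*}
For (2), Lemma \ref{lemmavee}(2) furnishes the bounds $d(x)d(y)\leq \inf_{p}(|x-p||y-p|)\leq d_{min}(d_{min}+|x-y|)$, which invert and multiply by $|x-y|$ to produce the desired estimate for $c_D(x,y)$. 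For (3), the same reciprocal-and-multiply step applied to Lemma \ref{lemmavee}(3) yields the stated bounds for $b_{D,q}(x,y)$.

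There is no real obstacle here; the only thing worth checking with any care is that $\inf_p N(x,y,p)>0$ so that passing to reciprocals is legitimate. This is automatic from the assumption $x,y\in D$ (hence $d(x),d(y)>0$) together with the lower bounds in Lemma \ref{lemmavee}, since each lower bound is a positive quantity. Once that is noted, the three inequalities are literally the lower and upper estimates of Lemma \ref{lemmavee} rearranged, and no further argument is needed.
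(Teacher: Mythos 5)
Your proposal is correct and matches the paper exactly: the paper gives no separate proof of Lemma \ref{ctcb}, stating only that it follows immediately from Lemma \ref{lemmavee}, and your reduction (pulling the constant numerator out, replacing the supremum of the quotient by the infimum of the denominator, and inverting the bounds) is precisely the intended argument. The remark about positivity of the infimum is a sensible, if minor, addition.
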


\bigskip

\section{Inclusion Properties}

In this section, we study inclusion relation between the $\tilde{c}$ metric balls and some related hyperbolic type metric balls.
Let $(D, d)$ be a metric space. The definition of the metric ball $B_d(x,r)$ is as follows:
\begin{equation*}
B_d(x,r)=\{ y\in D:d(x,y)<r\}.
\end{equation*}

\textbf{Theorem \ref{bct}} proves the inclusion relation between the $\tilde{c}$ metric ball and the Barrlund metric ball.
In particular, when $q=1$, the inclusion relation between the $\tilde{c}$ metric ball $B_{\tilde{c}}$ and the triangular ratio metric ball $B_s$ is shown.

\begin{theorem}\label{bct}
For any $x\in D\subsetneq \mathbb{R}^n$ and $ r\in(0,1)$,
\begin{equation}
B_{b_{D,q}}(x,R_1)\subset B_{\tilde{c}}(x,r)\subset B_{b_{D,q}}(x,R_2),\nonumber
\end{equation}
where $R_1=\frac{r}{2^{1/q}(1+r)}$ and $R_2=\frac{r}{2^{1/q}(1-r)}.$ Moreover, $R_2/R_1\to 1$ as $r\to0$.
\end{theorem}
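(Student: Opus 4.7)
The plan is to deduce the inclusions directly from the two-sided bounds in \textbf{Lemma \ref{ctcb}}, since parts (1) and (3) of that lemma sandwich both $\tilde{c}_D(x,y)$ and $2^{1/q}b_{D,q}(x,y)$ between the same two quantities $|x-y|/(|x-y|+d_{min})$ and $|x-y|/d_{min}$. The natural bridge between the two metric balls is the scale-free ratio $\tau:=|x-y|/d_{min}$, and both conditions $\tilde{c}_D(x,y)<r$ and $b_{D,q}(x,y)<R$ translate cleanly into upper bounds on $\tau$.

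For the left inclusion $B_{b_{D,q}}(x,R_1)\subset B_{\tilde{c}}(x,r)$, I would start from $b_{D,q}(x,y)<R_1=\frac{r}{2^{1/q}(1+r)}$ and use the lower bound from \textbf{Lemma \ref{ctcb}(3)} to obtain
\[
\frac{|x-y|}{|x-y|+d_{min}}<\frac{r}{1+r}.
\]
Since $t\mapsto t/(t+d_{min})$ is increasing, this is equivalent to $|x-y|<r\,d_{min}$, i.e. $\tau<r$. Then applying the upper bound in \textbf{Lemma \ref{ctcb}(1)} gives $\tilde{c}_D(x,y)\le|x-y|/d_{min}<r$.

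For the right inclusion $B_{\tilde{c}}(x,r)\subset B_{b_{D,q}}(x,R_2)$, I would run the same argument in reverse: from $\tilde{c}_D(x,y)<r$ and the lower bound in \textbf{Lemma \ref{ctcb}(1)} one gets $|x-y|/(|x-y|+d_{min})<r$, hence $\tau<r/(1-r)$ (this is where the hypothesis $r\in(0,1)$ is essential, to keep the factor $1-r$ positive). Feeding this into the upper bound in \textbf{Lemma \ref{ctcb}(3)} yields $b_{D,q}(x,y)\le|x-y|/(2^{1/q}d_{min})<r/(2^{1/q}(1-r))=R_2$. The asymptotic claim $R_2/R_1=(1+r)/(1-r)\to 1$ as $r\to 0$ is immediate.

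There is no real obstacle here: once one observes that both metrics are controlled by the single parameter $\tau$ via \textbf{Lemma \ref{ctcb}}, the whole proof reduces to two elementary monotonicity manipulations of fractions of the form $t/(t+d_{min})$. The only small care required is to invoke $r<1$ when clearing the denominator $1-r$; the rest is algebra.
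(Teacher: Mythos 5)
Your proposal is correct and follows essentially the same route as the paper: both inclusions are obtained by converting the hypothesis into the bound $|x-y|<r\,d_{min}$ (resp. $|x-y|<\frac{r}{1-r}d_{min}$) via the lower estimates in Lemma \ref{ctcb}(3) and \ref{ctcb}(1), and then applying the complementary upper estimates. The only difference is presentational (your explicit emphasis on the ratio $\tau=|x-y|/d_{min}$ and on monotonicity), not mathematical.
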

\begin{proof}
First we prove the left-hand side containment relation.
For any $y\in B_{b_{D,q}}(x,R_1)$, by \textbf{Lemma \ref{ctcb}(3)} we obtain that
\begin{equation*}
\frac{|x-y|}{2^{1/q}(|x-y|+d_{min})}\leq b_{D,q}(x,y)<R_1,
\end{equation*}
therefore
\begin{equation}
|x-y|<rd_{min}\nonumber.
\end{equation}
By \textbf{Lemma \ref{ctcb}(1)} we obtain that
\begin{equation}
\tilde{c}_D(x,y)
\leq \frac{|x-y|}{d_{min}}<\frac{rd_{min}}{d_{min}}=r.\nonumber
\end{equation}
Thus, $B_{b_{D,q}}(x,R_1)\subset B_{\tilde{c}}(x,r).$

\medskip

Next, we prove the right-hand side containment relation. For any $y\in B_{\tilde{c}}(x,r)$, by \textbf{Lemma \ref{ctcb}(1)} we obtain that
\begin{equation*}
 \frac{|x-y|}{d_{min}+|x-y|}\leq \tilde{c}_D(x,y)
 <r,\nonumber
\end{equation*}
therefore
\begin{equation}\label{star}
|x-y|<\frac{r}{1-r}d_{min}.
\end{equation}
By \textbf{Lemma \ref{ctcb}(3)} we obtain that
\begin{equation*}
B_{b_{D,q}}(x,y)
\leq\frac{|x-y|}{2^{1/q}d_{min}}<\frac{r}{2^{1/q}(1-r)}.
\end{equation*}
Thus, $B_{\tilde{c}}(x,r)\subset B_{b_{D,q}}(x,R_2). $

\medskip

Clearly,
\begin{equation}
\underset{r\to0}\lim \frac{R_2}{R_1}=1. \nonumber
\end{equation}

\end{proof}

\medskip

We easily obtain the following corollary.
\begin{corollary}
For any $x\in D\subsetneq \mathbb{R}^n$ and $ r\in(0,1)$,
\begin{equation}
B_s(x,R_1)\subset B_{\tilde{c}}(x,r)\subset B_s(x,R_2),\nonumber
\end{equation}
where $R_1=\frac{r}{2(1+r)}$ and $R_2=\frac{r}{2(1-r)}$. Moreover, $R_2/R_1\to 1$ as $r\to0$.
\end{corollary}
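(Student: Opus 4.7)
The plan is to obtain this corollary as an immediate specialization of Theorem \ref{bct}. Recall from Section 2.4 that the Barrlund metric $b_{D,q}$ reduces to the triangular ratio metric $s_D$ precisely when $q=1$, i.e.\ $b_{D,1}(x,y)=s_D(x,y)$ for all $x,y\in D$. Consequently the metric ball $B_{b_{D,1}}(x,r)$ is by definition the same set as $B_s(x,r)$.

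Thus I would simply invoke Theorem \ref{bct} with $q=1$. Substituting $q=1$ into the general bounds $R_1=\frac{r}{2^{1/q}(1+r)}$ and $R_2=\frac{r}{2^{1/q}(1-r)}$ yields the factor $2^{1/q}=2$, and hence $R_1=\frac{r}{2(1+r)}$ and $R_2=\frac{r}{2(1-r)}$, matching the statement exactly. The resulting chain of inclusions
\begin{equation*}
B_s(x,R_1)=B_{b_{D,1}}(x,R_1)\subset B_{\tilde{c}}(x,r)\subset B_{b_{D,1}}(x,R_2)=B_s(x,R_2)
\end{equation*}
is then immediate.

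For the asymptotic claim, $R_2/R_1=(1+r)/(1-r)$, which clearly tends to $1$ as $r\to 0$; this also follows directly from the corresponding assertion already proved in Theorem \ref{bct}. Since every required step is a verbatim specialization of a statement we are allowed to cite, there is no genuine obstacle in this proof; the entire argument amounts to recording the identification $b_{D,1}=s_D$ and substituting $q=1$.
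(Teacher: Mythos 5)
Your proof is correct and matches the paper's approach exactly: the paper states this corollary as an immediate consequence of Theorem \ref{bct}, obtained by setting $q=1$ and using the identification $b_{D,1}=s_D$, which is precisely what you do. No issues.
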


\medskip

\textbf{Theorem \ref{cct}} proves the inclusion relation between the $\tilde{c}$ metric ball $B_{\tilde{c}}$ and the Cassinian metric ball $B_c$.
\begin{theorem}\label{cct}
For any $x\in D\subsetneq \mathbb{R}^n$ and $ r\in(0,1)$,
\begin{equation}
B_c(x,R_1)\subset B_{\tilde{c}}(x,r)\subset B_c(x,R_2),\nonumber
\nonumber
\end{equation}
where $R_1=\frac{r}{(1+r)d(x)}$ and $R_2=\frac{r}{(1-r)d(x)}.$ Moreover, $R_2/R_1\to 1$ as $r\to0$.
\end{theorem}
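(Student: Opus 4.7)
The plan is to mirror the structure of the proof of Theorem~\ref{bct}: for each inclusion I would combine one bound from Lemma~\ref{ctcb}(1) (for $\tilde{c}_D$) with one bound from Lemma~\ref{ctcb}(2) (for $c_D$), extract an inequality on $|x-y|$ in terms of $d_{min}$, and feed it into the opposite inequality. The extra wrinkle compared with Theorem~\ref{bct} is that the radii $R_1,R_2$ are stated in terms of $d(x)$ (which does not depend on $y$), while Lemma~\ref{ctcb} naturally produces estimates in terms of $d_{min}=d(x)\wedge d(y)$. The entire bridge between these two scales will be the trivial comparison $d(x)\ge d_{min}$.

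For the left-hand inclusion, I would take $y\in B_c(x,R_1)$ and combine $c_D(x,y)<\frac{r}{(1+r)d(x)}$ with the lower bound $c_D(x,y)\ge\frac{|x-y|}{d_{min}(d_{min}+|x-y|)}$ from Lemma~\ref{ctcb}(2). Cross-multiplying gives $(1+r)d(x)|x-y|<rd_{min}^2+rd_{min}|x-y|$. Using $d(x)\ge d_{min}$ on the left to replace $(1+r)d(x)$ by $(1+r)d_{min}$ and cancelling the $rd_{min}|x-y|$ terms should yield $|x-y|<rd_{min}$. Then Lemma~\ref{ctcb}(1) immediately gives $\tilde{c}_D(x,y)\le\frac{|x-y|}{d_{min}}<r$, so $y\in B_{\tilde{c}}(x,r)$.

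For the right-hand inclusion, I would take $y\in B_{\tilde{c}}(x,r)$ and reuse the derivation of inequality~(\ref{star}) from the proof of Theorem~\ref{bct}, namely $|x-y|<\frac{r}{1-r}d_{min}$. Then Lemma~\ref{ctcb}(2) supplies $c_D(x,y)\le\frac{|x-y|}{d(x)d(y)}$; since $d(y)\ge d_{min}$, this is at most $\frac{|x-y|}{d(x)d_{min}}<\frac{r/(1-r)\cdot d_{min}}{d(x)d_{min}}=\frac{r}{(1-r)d(x)}=R_2$, placing $y$ in $B_c(x,R_2)$. The final limit is mechanical: $R_2/R_1=(1+r)/(1-r)\to 1$ as $r\to 0$.

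The only real obstacle is the asymmetry just described. Both the $c_D$ lower bound and the $\tilde{c}_D$ upper bound scale with $d_{min}$, whereas the prescribed $R_1$ scales with $d(x)$; in the left inclusion the worry is that substituting $d_{min}$ for $d(x)$ in the wrong direction might only yield a weaker bound like $|x-y|<\frac{r}{1-r}d_{min}$, which would give $\tilde{c}_D(x,y)<\frac{r}{1-r}$ rather than $<r$. The delicate observation is that after cross-multiplying, the term $(1+r)d(x)|x-y|$ still dominates $rd_{min}|x-y|$ even after replacing $d(x)$ by $d_{min}$, because $1+r>r$; this is exactly what lets the sharp conclusion $|x-y|<rd_{min}$ survive and makes the radius $R_1=\frac{r}{(1+r)d(x)}$ (rather than a smaller one) suffice.
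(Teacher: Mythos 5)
Your proposal is correct and follows essentially the same route as the paper: the left inclusion combines the lower bound of Lemma \ref{ctcb}(2) with $d(x)\geq d_{min}$ to extract $|x-y|<rd_{min}$ and then applies Lemma \ref{ctcb}(1), and the right inclusion feeds the bound $|x-y|<\frac{r}{1-r}d_{min}$ (equivalently the paper's $|x-y|<\frac{r}{1-r}d(y)$) into the upper bound $c_D(x,y)\leq\frac{|x-y|}{d(x)d(y)}$. The only differences are cosmetic (cross-multiplying before versus after replacing $d(x)$ by $d_{min}$), so no further comment is needed.
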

\begin{proof}
First we prove the left-hand side containment relation.
For any $y\in B_c(x,R_1)$, by \textbf{Lemma \ref{ctcb}(2)} we obtain that
\begin{equation*}
\frac{|x-y|}{d_{min}(d_{min}+|x-y|)}\leq c_D(x,y)<R_1=\frac{r}{(1+r)d(x)}\leq \frac{r}{(1+r)d_{min}},
\end{equation*}
therefore
\begin{equation}
|x-y|<rd_{min}\nonumber.
\end{equation}
By \textbf{Lemma \ref{ctcb}(1)} we obtain that
\begin{equation}
\tilde{c}_D(x,y)
\leq \frac{|x-y|}{d_{min}}<\frac{rd_{min}}{d_{min}}=r.\nonumber
\end{equation}
Thus, $B_c(x,R_1)\subset B_{\tilde{c}}(x,r).$

Next, we prove the right-hand side containment relation.
For any $y\in B_{\tilde{c}}(x,r)$, by \textbf{Lemma \ref{ctcb}(1)} we obtain that
\begin{equation}
\frac{|x-y|}{|x-y|+d(y)}\leq\frac{|x-y|}{|x-y|+d_{min}}
\leq \tilde{c}_D(x,y)<r\nonumber,
\end{equation}
therefore
\begin{equation}
|x-y|<\frac{rd(y)}{1-r}\nonumber.
\end{equation}
By \textbf{Lemma \ref{ctcb}(2)} we obtain that
\begin{equation}
c_D(x,y)\leq\frac{|x-y|}{d(x)d(y)}<\frac{r}{(1-r)d(x)}. \nonumber
\end{equation}
Thus, $B_{\tilde{c}}(x,r)\subset B_{{c}}(x,R_2).$

\medskip

Clearly,
\begin{equation}
\underset{r\to0}\lim \frac{R_2}{R_1}=1. \nonumber
\end{equation}

\end{proof}

\medskip

The following theorem proves the inclusion relation between the $\tilde{c}$ metric ball $B_{\tilde{c}}$ and the distance ratio metric ball $B_j$.
\begin{theorem}\label{thm:j}
For any $x\in D\subsetneq \mathbb{R}^n$ and $ r\in(0,1)$,
\begin{equation}
B_j(x,R_1)\subset B_{\tilde{c}}(x,r)\subset B_j(x,R_2),\nonumber
\end{equation}
where $R_1=\log(1+r)$ and $R_2=\log\left(1+\frac{r}{1-r}\right).$ Moreover, $R_2/R_1\to 1$ as $r\to0$.
\end{theorem}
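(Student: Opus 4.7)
The plan is to reduce everything to an algebraic comparison of the quantity $|x-y|/d_{\min}$, because Lemma \ref{ctcb}(1) squeezes $\tilde{c}_D(x,y)$ between $|x-y|/(|x-y|+d_{\min})$ and $|x-y|/d_{\min}$, and the distance ratio metric is literally $j_D(x,y)=\log(1+|x-y|/d_{\min})$. So both inclusions should follow from Lemma \ref{ctcb}(1) in exactly the same pattern as Theorem \ref{bct} and Theorem \ref{cct}; no new geometric idea seems to be needed.

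For the left inclusion $B_j(x,R_1)\subset B_{\tilde{c}}(x,r)$, I would pick $y\in B_j(x,R_1)$, unwind $j_D(x,y)<\log(1+r)$ to the equivalent form $|x-y|<r\,d_{\min}$, and then apply the upper estimate $\tilde{c}_D(x,y)\le |x-y|/d_{\min}$ from Lemma \ref{ctcb}(1) to conclude $\tilde{c}_D(x,y)<r$. For the right inclusion $B_{\tilde{c}}(x,r)\subset B_j(x,R_2)$, I would take $y\in B_{\tilde{c}}(x,r)$ and use the lower estimate $|x-y|/(|x-y|+d_{\min})\le \tilde{c}_D(x,y)<r$; solving for $|x-y|/d_{\min}$ gives $|x-y|/d_{\min}<r/(1-r)$, so $j_D(x,y)<\log(1+r/(1-r))=R_2$. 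It is worth noting $R_2=-\log(1-r)$, which makes the last step visually cleaner.

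For the asymptotic claim $R_2/R_1\to 1$ as $r\to 0$, I would simply Taylor expand: $\log(1+r)=r-\tfrac{r^2}{2}+O(r^3)$ and $-\log(1-r)=r+\tfrac{r^2}{2}+O(r^3)$, so both numerator and denominator behave like $r$ and the ratio tends to $1$.

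The main obstacle is essentially nonexistent; the theorem is a routine corollary of Lemma \ref{ctcb}(1). The only point requiring a second of care is the algebraic inversion $|x-y|/(|x-y|+d_{\min})<r \Longleftrightarrow |x-y|<\frac{r}{1-r}d_{\min}$, which depends on $r<1$ (already part of the hypothesis), and noticing that $1+r/(1-r)=1/(1-r)$ to confirm the form of $R_2$.
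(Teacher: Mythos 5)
Your proposal is correct and follows essentially the same route as the paper: both inclusions are reduced via Lemma \ref{ctcb}(1) to the bound $|x-y|<r\,d_{\min}$ on one side and $|x-y|<\frac{r}{1-r}d_{\min}$ (the paper's inequality (\ref{star})) on the other, exactly as you describe. Your Taylor-expansion justification of $R_2/R_1\to 1$ merely makes explicit what the paper dismisses as ``clearly.''
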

\begin{proof}
First we prove the left-hand side containment relation.
For any $y\in B_j(x,R_1)$, by $j_D(x,y)<R_1$ we obtain that
\begin{equation}
|x-y|<rd_{min}\nonumber.
\end{equation}
By \textbf{Lemma \ref{ctcb}(1)} we obtain that
\begin{equation}
\tilde{c}_D(x,y)
\leq \frac{|x-y|}{d_{min}}<\frac{rd_{min}}{d_{min}}=r.\nonumber
\end{equation}
Thus, $B_j(x,R_1)\subset B_{\tilde{c}}(x,r).$

\medskip

Next, we prove the right-hand side containment relation.
For any $y\in B_{\tilde{c}}(x,r)$, by \textbf{Lemma \ref{ctcb}(1)} and (\ref{star}), we obtain that
\begin{equation*}
j_D(x,y)=\log\left(1+\frac{|x-y|}{d_{min}}\right)<\log\left(1+\frac{r}{1-r}\right).
\end{equation*}
Thus, $B_{\tilde{c}}(x,r)\subset B_j(x,R_2). $

\medskip

Clearly,
\begin{equation}
\underset{r\to0}\lim \frac{R_2}{R_1}=1. \nonumber
\end{equation}

\end{proof}

\medskip

By the well-known inequality (\ref{jrhoB}), it follows that for any  $x\in D\in \{\mathbb{B}^n, \mathbb{H}^n\}$, $r>0$,
\begin{equation}\label{jrho}
B_\rho(x,r)\subset B_j(x,r) \subset B_\rho(x,2r).
\end{equation}
Therefore, by \textbf{Theorem \ref{thm:j}} and inequality (\ref{jrho})
we obtain the inclusion relation between the $\tilde{c}$ metric ball $B_{\tilde{c}}$ and the hyperbolic metric ball $B_{\rho}$.

\medskip

\begin{theorem}
For any $x\in D\in \{\mathbb{B}^n, \mathbb{H}^n\}$ and $ r\in(0,1)$,
\begin{equation*}
B_\rho(x,R_1)\subset B_{\tilde{c}}(x,r)\subset B_\rho(x,R_2),
\end{equation*}
where $R_1=\log(1+r)$ and $R_2=2\log\left(1+\frac{r}{1-r}\right).$ Moreover, $R_2/R_1\to 2$ as $r\to0$.
\end{theorem}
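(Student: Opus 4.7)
The plan is to obtain the result as a direct composition of two inclusions that are already available in the paper: the $\tilde{c}$--versus--$j$ inclusion from Theorem \ref{thm:j}, and the $j$--versus--$\rho$ inclusion (\ref{jrho}), the latter being valid precisely on the two domains $\mathbb{B}^n$ and $\mathbb{H}^n$ appearing in the statement. Since both inclusions have already been established, no new geometric work is needed; the proof should be essentially one chain of set inclusions plus a limit computation.

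For the left-hand inclusion, I would start from an arbitrary $y\in B_\rho(x,R_1)$ with $R_1=\log(1+r)$. Applying the left half of (\ref{jrho}) with this radius gives $B_\rho(x,R_1)\subset B_j(x,R_1)$, so $j_D(x,y)<\log(1+r)$. Then invoking the left half of Theorem \ref{thm:j}, which precisely says $B_j(x,\log(1+r))\subset B_{\tilde{c}}(x,r)$, yields $\tilde{c}_D(x,y)<r$. Composing, $B_\rho(x,R_1)\subset B_{\tilde{c}}(x,r)$.

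For the right-hand inclusion, the order of the two inputs is reversed. Take $y\in B_{\tilde{c}}(x,r)$; the right half of Theorem \ref{thm:j} gives $y\in B_j\bigl(x,\log(1+r/(1-r))\bigr)$. Feeding this into the right half of (\ref{jrho}) with radius $\log(1+r/(1-r))$ yields $y\in B_\rho\bigl(x,2\log(1+r/(1-r))\bigr)=B_\rho(x,R_2)$. Hence $B_{\tilde{c}}(x,r)\subset B_\rho(x,R_2)$.

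It remains to verify the asymptotic ratio. Since $\log(1+u)\sim u$ as $u\to 0$ and $r/(1-r)\sim r$ as $r\to 0$, both $R_1\sim r$ and $R_2\sim 2r$, so $R_2/R_1\to 2$ as $r\to 0$. I do not anticipate any genuine obstacle; the only thing to be careful about is that inequality (\ref{jrho}) is restricted to $D\in\{\mathbb{B}^n,\mathbb{H}^n\}$, which matches the hypothesis of the theorem exactly, so no additional assumptions on $D$ need to be imposed.
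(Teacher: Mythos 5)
Your proposal is correct and follows essentially the same route as the paper: both prove the left inclusion by chaining $B_\rho(x,R_1)\subset B_j(x,R_1)\subset B_{\tilde{c}}(x,r)$ via (\ref{jrho}) and Theorem \ref{thm:j}, and the right inclusion by the reverse composition, the only cosmetic difference being that the paper computes the limit of $R_2/R_1$ by l'H\^opital's rule while you use the asymptotic $\log(1+u)\sim u$.
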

\begin{proof}
First we prove the left-hand side containment relation.
By \textbf{Theorem \ref{thm:j}} and inequality (\ref{jrho}),
\begin{equation*}
B_\rho(x,R_1)\subset B_j(x,R_1)\subset B_{\tilde{c}}(x,{\rm e}^{R_1}-1).
\end{equation*}
Thus, $B_\rho(x,R_1)\subset B_{\tilde{c}}(x,r)$.

\medskip

Next, we prove the right-hand side containment relation.
By \textbf{Theorem \ref{thm:j}} and inequality (\ref{jrho}),
\begin{equation*}
B_{\tilde{c}}(x,r)\subset B_j\left(x,\log\left(1+\frac{r}{1-r}\right)\right)\subset B_\rho\left(x,2\log\left(1+\frac{r}{1-r}\right)\right).
\end{equation*}
Thus, $B_{\tilde{c}}(x,r)\subset B_\rho(x,R_2)$.

\medskip

By l'H\"opital rule, it is easy to see that
\begin{equation*}
\underset{r\to0}\lim \frac{R_2}{R_1}=\underset{r\to0}\lim\frac{2\log\left(1+\frac{r}{1-r}\right)}{\log(1+r)}=2.
\end{equation*}

\end{proof}

\medskip

By \textbf{Theorem \ref{thm:j}} and inequality (\ref{jk})
we obtain the inclusion relation between the $\tilde{c}$ metric ball $B_{\tilde{c}}$ and the quasihyperbolic metric ball $B_k$.
\begin{lemma}\label{lemma:k}{\rm{\cite[Lemma 3.7] {jw20}}}
For any $x\in D\subsetneq \mathbb{R}^n, y\in B(x,d(x))$,
\begin{equation*}
k_D(x,y)\leq \log\left(1+\frac{|x-y|}{d(x)-|x-y|}\right).
\end{equation*}
\end{lemma}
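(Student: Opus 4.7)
The plan is to bound $k_D(x,y)$ from above by exhibiting one specific path from $x$ to $y$ and computing the line integral along it. Since $|x-y|<d(x)$, the straight line segment
$$\gamma(t)=x+t(y-x),\qquad t\in[0,1],$$
lies entirely in the open ball $B(x,d(x))\subset D$, so $\gamma\in\Gamma(x,y)$ and is admissible in the infimum defining $k_D(x,y)$.

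Next I would produce a pointwise lower bound for $d(\gamma(t),\partial D)$. For any $p\in\partial D$ the triangle inequality gives $|\gamma(t)-p|\geq|x-p|-|x-\gamma(t)|=|x-p|-t|x-y|$, and taking the infimum over $p\in\partial D$ yields
$$d(\gamma(t),\partial D)\geq d(x)-t|x-y|.$$
Since $t|x-y|\leq|x-y|<d(x)$, the right-hand side is strictly positive for all $t\in[0,1]$, so the reciprocal integrand is well defined.

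With $|d\gamma(t)|=|x-y|\,dt$, the definition of $k_D$ then gives
$$k_D(x,y)\leq\int_\gamma\frac{|dz|}{d(z,\partial D)}\leq\int_0^1\frac{|x-y|}{d(x)-t|x-y|}\,dt.$$
A direct substitution $u=d(x)-t|x-y|$ evaluates the last integral to $\log\dfrac{d(x)}{d(x)-|x-y|}=\log\!\left(1+\dfrac{|x-y|}{d(x)-|x-y|}\right)$, which is exactly the claimed bound.

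There is essentially no serious obstacle here: the argument is a textbook estimate that compares $k_D$ to its value on the straight line segment. The only point requiring a brief justification is the reverse-triangle lower bound for $d(\gamma(t),\partial D)$, together with checking that $\gamma\subset D$, both of which follow immediately from the hypothesis $|x-y|<d(x)$.
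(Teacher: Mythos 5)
Your proof is correct. The paper does not prove this lemma at all---it is quoted directly from \cite[Lemma 3.7]{jw20}---and your argument (integrating $|dz|/d(z)$ along the straight segment from $x$ to $y$, using the lower bound $d(\gamma(t))\ge d(x)-t|x-y|>0$ guaranteed by $|x-y|<d(x)$, and evaluating the resulting integral to $\log\frac{d(x)}{d(x)-|x-y|}$) is exactly the standard derivation of this classical estimate.
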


\medskip

\begin{theorem}
For any $x\in D\subsetneq \mathbb{R}^n$ and $ r\in\left(0,\frac{1}{2}\right)$,
\begin{equation}
B_k(x,R_1)\subset B_{\tilde{c}}(x,r)\subset B_k(x,R_2),\nonumber
\end{equation}
where $R_1=\log(1+r)$ and $R_2=\log\left(1+\frac{r}{1-2r}\right).$ Moreover, $R_2/R_1\to 1$ as $r\to0$.
\end{theorem}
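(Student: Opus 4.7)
The plan is to mimic the structure of the previous theorem's proof, chaining Theorem \ref{thm:j} with the comparison inequality \eqref{jk} on one side, and with Lemma \ref{lemma:k} on the other side. The restriction $r\in(0,1/2)$ is forced by the hypothesis $y\in B(x,d(x))$ in Lemma \ref{lemma:k}, as I explain below.

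For the left-hand inclusion, I would invoke \eqref{jk}, which gives $j_D\le k_D$, hence $B_k(x,R_1)\subset B_j(x,R_1)$. Since $R_1=\log(1+r)$, Theorem \ref{thm:j} then yields $B_j(x,R_1)\subset B_{\tilde{c}}(x,r)$. Composing the two containments finishes this direction with no additional computation.

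For the right-hand inclusion I would take $y\in B_{\tilde{c}}(x,r)$ and reuse the key estimate \eqref{star} derived during the proof of Theorem \ref{thm:j}, namely $|x-y|<\frac{r}{1-r}d_{min}\le\frac{r}{1-r}d(x)$. The assumption $r<1/2$ makes $\frac{r}{1-r}<1$, which guarantees $|x-y|<d(x)$, i.e.\ $y\in B(x,d(x))$; this is precisely the hypothesis needed to apply Lemma \ref{lemma:k}. The lemma then gives
\begin{equation*}
k_D(x,y)\le \log\!\left(1+\frac{|x-y|}{d(x)-|x-y|}\right),
\end{equation*}
and since the function $t\mapsto t/(d(x)-t)$ is increasing, plugging in the upper bound $|x-y|<\frac{r}{1-r}d(x)$ leads, after a short algebraic simplification, to $k_D(x,y)<\log\bigl(1+\frac{r}{1-2r}\bigr)=R_2$, so $y\in B_k(x,R_2)$.

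The limiting ratio $R_2/R_1\to 1$ as $r\to 0$ follows by L'H\^{o}pital's rule (or a first-order Taylor expansion), since both $\log(1+r)$ and $\log\bigl(1+\frac{r}{1-2r}\bigr)$ are asymptotic to $r$. The only nontrivial point is the one already mentioned: ensuring the domain condition of Lemma \ref{lemma:k} via the bound $r<1/2$. The rest is essentially a transcription of the arguments used for the $j$-metric and $\rho$-metric theorems earlier in this section.
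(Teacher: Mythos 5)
Your proposal is correct and follows essentially the same route as the paper: the left inclusion via $j_D\le k_D$ composed with Theorem \ref{thm:j}, and the right inclusion via the bound $|x-y|<\frac{r}{1-r}d_{min}$ to verify the hypothesis of Lemma \ref{lemma:k} and then the monotonicity of $t\mapsto t/(d(x)-t)$. The only cosmetic difference is that the paper passes through $d_{min}$ (bounding $\frac{|x-y|}{d(x)-|x-y|}\le\frac{|x-y|}{d_{min}-|x-y|}$) while you substitute directly into the $d(x)$ expression; both yield the same constant $R_2$.
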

\begin{proof}
First we prove the left-hand side containment relation.
For any $y\in B_k(x,R_1)$, by $k_D(x,y)<R_1$ and inequality (\ref{jk}) we obtain that
\begin{equation}
j_D(x,y)\leq k_D(x,y)<R_1.\nonumber
\end{equation}
By \textbf{Theorem \ref{thm:j}} we have $B_k(x,R_1)\subset B_j(x,R_1)\subset B_{\tilde{c}}(x,r)$.

\medskip

Next, we prove the right-hand side containment relation.
For any $y\in B_{\tilde{c}}(x,r)$, by \textbf{Theorem \ref{thm:j}}
we obtain that $y\in B_{\tilde{c}}(x,r)\subset B_j\left(x,\log\left(1+\frac{r}{1-r}\right)\right)$. Then
\begin{equation}
j_D(x,y)<\log\left(1+\frac{r}{1-r}\right)\nonumber.
\end{equation}
Hence
\begin{equation}
|x-y|<\frac{r}{1-r}d_{min}<d_{min}\nonumber.
\end{equation}
Then we have $y\in B(x,d_{min}) \subset B(x,d(x)).$
By \textbf{Lemma \ref{lemma:k}} we obtain that
\begin{equation}
k_D(x,y)\leq \log \left(1+\frac{|x-y|}{d(x)-|x-y|}\right)<\log\left(1+\frac{|x-y|}{d_{min}-|x-y|}\right)<\log\left(1+\frac{r}{1-2r}\right)\nonumber.
\end{equation}
Thus, $B_{\tilde{c}}(x,r)\subset B_k(x,R_2)$.

\medskip

Clearly,
\begin{equation}
\underset{r\to0}\lim \frac{R_2}{R_1}=1. \nonumber
\end{equation}

\end{proof}

\medskip

The following theorem proves the inclusion relation between the $\tilde{c}$ metric ball $B_{\tilde{c}}$ and the $h_{D,c}$ metric ball $B_{h_{D,c}}$.
\begin{theorem}
For any $x\in D\subsetneq \mathbb{R}^n$ and $ r\in(0,1)$,
\begin{equation}
B_{h_{D,c}}(x,R_1)\subset B_{\tilde{c}}(x,r)\subset B_{h_{D,c}}(x,R_2),\nonumber
\end{equation}
where $R_1=\log(1+cr)$ and $R_2=\log\left(1+\frac{cr}{1-r}\right).$ Moreover, $R_2/R_1\to 1$ as $r\to0$.
\end{theorem}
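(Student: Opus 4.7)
The plan is to base both inclusions on a sharper upper estimate for $\tilde{c}_D$ than the one recorded in Lemma \ref{ctcb}(1), namely $\tilde{c}_D(x,y)\leq |x-y|/\sqrt{d(x)d(y)}$, which meshes exactly with the denominator appearing in $h_{D,c}$. To verify this estimate, I would start from the pointwise inequality $|x-p|\vee|y-p|\geq \sqrt{|x-p||y-p|}$, valid for every $p\in\partial D$; passing to $\sup_{p\in\partial D}$ and invoking Lemma \ref{lemmavee}(2) to bound $\inf_{p\in\partial D}|x-p||y-p|$ from below by $d(x)d(y)$ yields the displayed bound.

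For the left-hand inclusion $B_{h_{D,c}}(x,R_1)\subset B_{\tilde{c}}(x,r)$, take $y\in B_{h_{D,c}}(x,R_1)$ and unwind the definition of $h_{D,c}$: the inequality $\log(1+c|x-y|/\sqrt{d(x)d(y)})<\log(1+cr)$ is equivalent to $|x-y|/\sqrt{d(x)d(y)}<r$, and the sharper bound above then gives $\tilde{c}_D(x,y)<r$ directly.

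For the right-hand inclusion $B_{\tilde{c}}(x,r)\subset B_{h_{D,c}}(x,R_2)$, I would run the familiar argument via Lemma \ref{ctcb}(1): the hypothesis $\tilde{c}_D(x,y)<r$ combined with the lower estimate $\tilde{c}_D(x,y)\geq |x-y|/(|x-y|+d_{min})$ yields $|x-y|<\frac{r}{1-r}d_{min}$, exactly as in inequality \eqref{star}. Since $d_{min}\leq \sqrt{d(x)d(y)}$ (because $d_{min}^{2}\leq d(x)d(y)$), we then have $c|x-y|/\sqrt{d(x)d(y)}\leq c|x-y|/d_{min}<cr/(1-r)$, so $h_{D,c}(x,y)<\log(1+cr/(1-r))=R_2$. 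The asymptotic claim $R_2/R_1\to 1$ follows from the Taylor expansions $\log(1+cr)=cr+O(r^{2})$ and $\log(1+cr/(1-r))=cr+O(r^{2})$, or alternatively by l'H\^opital.

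The main obstacle is realizing that the upper bound $\tilde{c}_D(x,y)\leq |x-y|/d_{min}$ from Lemma \ref{ctcb}(1) is \textbf{not} strong enough for the left inclusion: since $d_{min}$ can be strictly smaller than $\sqrt{d(x)d(y)}$, the assumption $|x-y|<r\sqrt{d(x)d(y)}$ does not force $|x-y|<rd_{min}$. The geometric-mean upper bound, which is the only ingredient not already packaged as a stand-alone lemma, is precisely what closes this gap.
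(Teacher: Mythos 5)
Your proposal is correct and takes essentially the same route as the paper: the right-hand inclusion is word-for-word the paper's argument (Lemma \ref{ctcb}(1), inequality (\ref{star}), and $\sqrt{d(x)d(y)}\geq d_{min}$), and for the left-hand inclusion you and the paper both correctly recognize that the $d_{min}$ bound of Lemma \ref{ctcb}(1) is insufficient, differing only in the intermediate quantity used to close the gap --- the paper bounds $\inf_{p\in\partial D}(|x-p|\vee|y-p|)\geq d_{max}$ and uses $\sqrt{d(x)d(y)}\leq d_{max}$, while you use the (slightly weaker but equally sufficient) estimate $\tilde{c}_D(x,y)\leq |x-y|/\sqrt{d(x)d(y)}$. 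Both versions are valid, and your justification of the geometric-mean bound via $|x-p|\vee|y-p|\geq\sqrt{|x-p||y-p|}$ and Lemma \ref{lemmavee}(2) is sound.
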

\begin{proof}
First we prove the left-hand side containment relation.
For any $y\in B_{h_{D,c}}(x,R_1)$, by $h_{D,c}(x,y)<R_1$ we obtain that
\begin{equation}
|x-y|<r\sqrt{d(x)d(y)}<rd_{max}\nonumber.
\end{equation}
By the definition of $\tilde{c}$ metric, we have
\begin{equation}
\tilde{c}_D(x,y)=\frac{|x-y|}{\underset{p\in \partial D}\inf(|x-p|\vee |y-p|)}\leq \frac{|x-y|}{d_{max}}<\frac{rd_{max}}{d_{max}}=r.\nonumber
\end{equation}
Thus, $B_{h_{D,c}}(x,R_1)\subset B_{\tilde{c}}(x,r)$.

\medskip

Next, we prove the right-hand side containment relation.
For any $y\in B_{\tilde{c}}(x,r)$, by \textbf{Lemma \ref{ctcb}(1)} and (\ref{star})
we obtain that
\begin{equation*}
h_{D,c}(x,y)=\log\left(1+c\frac{|x-y|}{\sqrt{d(x)d(y)}}\right)\leq\log\left(1+c\frac{|x-y|}{d_{min}}\right)<\log\left(1+c\frac{r}{1-r}\right).
\end{equation*}
Thus, $B_{\tilde{c}}(x,r)\subset B_{h_{D,c}}(x,R_2)$.

\medskip

Clearly,
\begin{equation}
\underset{r\to0}\lim \frac{R_2}{R_1}=1. \nonumber
\end{equation}

\end{proof}

\medskip

\textbf{Theorem \ref{ctt}} proves the inclusion relation between the $\tilde{c}$ metric ball $B_{\tilde{c}}$ and the $t$ metric ball $B_t$.
\begin{theorem}\label{ctt}
For any $x\in D\subsetneq \mathbb{R}^n$ and $ r\in(0,1)$,
\begin{equation}
B_t(x,R_1)\subset B_{\tilde{c}}(x,r)\subset B_t(x,R_2),\nonumber
\end{equation}
where $R_1=\frac{r}{2+r}$ and $R_2=\frac{r}{2(1-r)}.$ Moreover, $R_2/R_1\to 1$ as $r\to0$.
\end{theorem}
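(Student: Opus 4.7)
The approach mirrors the earlier theorems in this section: convert the defining inequality of one metric into a size bound on $|x-y|$ in terms of $d_{min}$ or $d_{max}$, and then read off the corresponding bound for the other metric. The key ingredient I will exploit is the tighter estimate
\begin{equation*}
\tilde{c}_D(x,y) \le \frac{|x-y|}{d_{max}},
\end{equation*}
which follows from the fact that $|x-p| \vee |y-p| \ge d(x)\vee d(y) = d_{max}$ for every $p\in\partial D$. This sharper bound was already used implicitly in the preceding theorem for $B_{h_{D,c}}$, although it does not appear as a stated part of Lemma~\ref{ctcb}.

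For the forward inclusion $B_t(x,R_1)\subset B_{\tilde{c}}(x,r)$, I take $y\in B_t(x,R_1)$ and rearrange $t_D(x,y)<\frac{r}{2+r}$ algebraically to $2|x-y|<r(d(x)+d(y))$. Since $d(x)+d(y)\le 2 d_{max}$, this yields $|x-y|<r\,d_{max}$, and applying the displayed bound on $\tilde{c}_D$ gives $\tilde{c}_D(x,y)<r$.

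For the reverse inclusion, I start with $\tilde{c}_D(x,y)<r$ and invoke Lemma~\ref{ctcb}(1) to obtain $|x-y|<\frac{r}{1-r}d_{min}$, exactly as in (\ref{star}). Because the denominator of $t_D(x,y)$ satisfies $|x-y|+d(x)+d(y)\ge d(x)+d(y)\ge 2 d_{min}$, one has $t_D(x,y)\le |x-y|/(2 d_{min})$, and substituting the previous bound produces $t_D(x,y)<\frac{r}{2(1-r)}=R_2$. The limit claim is then immediate from $R_2/R_1=(2+r)/(2(1-r))\to 1$ as $r\to 0$.

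The argument is almost entirely bookkeeping; the one step that is easy to miss is that the forward inclusion genuinely requires the $d_{max}$-form of the upper bound on $\tilde{c}_D$, since the $t$ metric controls $|x-y|$ only relative to $d(x)+d(y)$, which is comparable to $d_{max}$ rather than to $d_{min}$. Using only the bound $\tilde{c}_D(x,y)\le |x-y|/d_{min}$ from Lemma~\ref{ctcb}(1) would not suffice at this point.
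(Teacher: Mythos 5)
Your proof is correct and follows essentially the same route as the paper: both directions hinge on the bound $\tilde{c}_D(x,y)\le |x-y|/d_{max}$ (which the paper also invokes directly from the definition rather than from Lemma \ref{ctcb}) for the forward inclusion, and on (\ref{star}) together with $d(x)+d(y)\ge 2d_{min}$ for the reverse one. The only difference is cosmetic: you solve the exact inequality $t_D(x,y)<R_1$ first and then bound $d(x)+d(y)\le 2d_{max}$, whereas the paper bounds the denominator first; the conclusion $|x-y|<r\,d_{max}$ is identical.
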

\begin{proof}
First we prove the left-hand side containment relation.
For any $y\in B_t(x,R_1)$ we have
\begin{equation}
\frac{|x-y|}{|x-y|+2d_{max}}\leq t(x,y)< R_1,\nonumber
\end{equation}
therefore
\begin{equation*}
|x-y|<rd_{max}.
\end{equation*}
By the definition of $\tilde{c}$ metric, we obtain
\begin{equation}
\tilde{c}_D(x,y)=\frac{|x-y|}{\underset{p\in \partial D}\inf(|x-p|\vee |y-p|)}\leq \frac{|x-y|}{d_{max}}<\frac{rd_{max}}{d_{max}}=r.\nonumber
\end{equation}
Thus, $B_t(x,R_1)\subset B_{\tilde{c}}(x,r)$.

\medskip

Next, we prove the right-hand side containment relation.
For any $y\in B_{\tilde{c}}(x,r)$, by \textbf{Lemma \ref{ctcb}(1)} and (\ref{star})
we obtain that
\begin{equation*}
t(x,y)=\frac{|x-y|}{|x-y|+d(x)+d(y)}\leq \frac{|x-y|}{d(x)+d(y)} \leq\frac{|x-y|}{2d_{min}}<\frac{r}{2(1-r)}.
\end{equation*}
Thus, $B_{\tilde{c}}(x,r)\subset B_t(x,R_2). $

\medskip

Clearly,
\begin{equation}
\underset{r\to0}\lim \frac{R_2}{R_1}=1. \nonumber
\end{equation}

\end{proof}

\bigskip

\section{Distortion Property of M\"obius Transformations}

In this section, we consider the distortion properties of M\"obius transformations on the unit ball under the $\tilde{c}$ metric.

If $D$ is a subset of $\mathbb{R}^n$, then the group of all M\"obius transformations that map $D$ onto itself is denoted by $\mathscr{G}\kern -0.25em\mathscr{M}(D)$.
For $a\in \mathbb{R}^n\backslash \{0\}$, let $a^*=\frac{a}{{|a|}^2}, 0^*=\infty$ and $\infty^*=0$. Let
\begin{equation}\label{eq}
\sigma_a(x)=a^*+r^2{(x-a^*)}^*,\quad r^2={|a^*|}^2-1
\end{equation}
be the inversion in the sphere $S^{n-1}(a^*,r)$\,.

\medskip

\begin{lemma}{\rm{\cite[Theorem 4.4.8] {ratcliffe1994foundation}}}\label{distortion}
Let $f\in \mathscr{G}\kern -0.25em\mathscr{M}(\mathbb{B}^n)$.
Then $f$ is an orthogonal transformation that maps $\mathbb{R}^n$ to $\mathbb{R}^n$ if and only if $f(0)=0$.
\end{lemma}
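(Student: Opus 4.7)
The forward direction is immediate: an orthogonal transformation of $\mathbb{R}^n$ is linear and therefore fixes the origin. The content of the lemma lies in the converse, on which I will focus.

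My plan is to exploit the fact that the boundary sphere $S^{n-1}=\partial\mathbb{B}^n$ identifies $0$ with $\infty$ as a pair of inverse points under the spherical inversion $\sigma(x)=x/|x|^2$. Any $f\in\mathscr{G}\kern -0.25em\mathscr{M}(\mathbb{B}^n)$ extends to a M\"obius transformation of $\widehat{\mathbb{R}}^n$ that preserves $S^{n-1}$ setwise, and any M\"obius transformation preserving a sphere must commute with the inversion in that sphere (a version of the Schwarz reflection principle applied to $\widehat{\mathbb{R}}^n$). Therefore $f\circ\sigma=\sigma\circ f$, and from $f(0)=0$ one obtains $f(\infty)=f(\sigma(0))=\sigma(f(0))=\sigma(0)=\infty$.

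Next I would invoke the classification of M\"obius transformations fixing $\infty$: these are precisely the Euclidean similarities $x\mapsto \lambda Ax+b$ with $\lambda>0$, $A\in O(n)$, and $b\in\mathbb{R}^n$. Imposing $f(0)=0$ forces $b=0$, so $f(x)=\lambda Ax$; and the requirement that $f$ maps $S^{n-1}$ onto $S^{n-1}$ yields $\lambda=1$. Thus $f=A$ is an orthogonal transformation of $\mathbb{R}^n$, which finishes the proof.

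The main obstacle I anticipate is the commutation $f\circ\sigma=\sigma\circ f$. One can either appeal to the Poincar\'e extension, regarding $f$ as the Euclidean trace on $S^{n-1}$ of a hyperbolic isometry of $\mathbb{B}^{n+1}$ that automatically intertwines the reflection across the equatorial hyperplane with itself, or argue directly by uniqueness: the map $g=\sigma\circ f\circ\sigma$ is a M\"obius transformation that agrees with $f$ on $S^{n-1}$ (since $\sigma$ is the identity there) and maps $\mathbb{B}^n$ into $\mathbb{B}^n$; two M\"obius transformations of $\widehat{\mathbb{R}}^n$ that coincide on an $(n-1)$-sphere and on the same side of it must be equal, so $g=f$. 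Once this reflection identity is in place, the remaining calculation is routine.
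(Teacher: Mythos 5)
Your proof is correct, but note that the paper itself gives no proof of this lemma: it is quoted directly from Ratcliffe \cite[Theorem 4.4.8]{ratcliffe1994foundation}, so there is no internal argument to compare against. Your route is the standard one and matches the textbook proof in substance: since $f$ preserves $\mathbb{B}^n$ it preserves $S^{n-1}$, hence by the symmetry (inverse-point) principle it commutes with the inversion $\sigma(x)=x/|x|^2$, so $f(0)=0$ forces $f(\infty)=\infty$; a M\"obius transformation fixing $\infty$ is a similarity $x\mapsto\lambda Ax+b$, and the conditions $f(0)=0$ and $f(S^{n-1})=S^{n-1}$ kill $b$ and force $\lambda=1$. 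Each of the supporting facts you invoke (M\"obius maps preserve inverse points; a M\"obius map fixing an $(n-1)$-sphere pointwise is the identity or the reflection in it, which underlies your uniqueness argument for $\sigma\circ f\circ\sigma=f$; M\"obius maps fixing $\infty$ are similarities) is standard and correctly applied, so the argument is complete. The only cosmetic caveat is that the lemma as stated is an "if and only if," and your one-line disposal of the forward direction implicitly uses that an orthogonal transformation is linear and fixes $0$ --- which is fine.
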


\medskip

\begin{theorem}
Let $f:\mathbb{B}^n\to\mathbb{B}^n$ be a M\"obius transformation, and $a\in \mathbb{B}^n$ with $f(a)=0$.
Then, for all $x,y\in \mathbb{B}^n$,
  \begin{equation}
    \frac{1-|a|}{1+|a|} \tilde{c}_{\mathbb{B}^n}(x,y) \leq \tilde{c}_{\mathbb{B}^n}(f(x),f(y))\leq \frac{1+|a|}{1-|a|} \tilde{c}_{\mathbb{B}^n}(x,y). \nonumber
  \end{equation}
\end{theorem}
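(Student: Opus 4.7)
The plan is to reduce the theorem to the case $f = \sigma_a$ and then exploit the classical distortion identity for inversions. Since $\sigma_a$ is itself a M\"obius self-map of $\mathbb{B}^n$ and satisfies $\sigma_a(0) = a$ (a direct computation from (\ref{eq})), the composition $\phi := f \circ \sigma_a$ also belongs to $\mathscr{G}\kern -0.25em\mathscr{M}(\mathbb{B}^n)$ and fixes $0$. By Lemma \ref{distortion}, $\phi$ is then an orthogonal transformation, which is a Euclidean isometry and therefore preserves $\tilde{c}_{\mathbb{B}^n}$ exactly. Writing $f = \phi \circ \sigma_a$ (using that $\sigma_a$ is an involution), the claim reduces to proving the two inequalities with $\sigma_a$ in place of $f$.

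Next I would substitute the classical identity $|\sigma_a(u) - \sigma_a(v)| = r^2\, |u - v|/(|u - a^*|\, |v - a^*|)$ into the definition of $\tilde{c}_{\mathbb{B}^n}(\sigma_a(x), \sigma_a(y))$. Because $\sigma_a$ maps $S^{n-1}$ bijectively onto itself, the boundary supremum can be reparametrized by $q = \sigma_a(p)$ with $p \in S^{n-1}$. After cancelling the common $r^2$ factors, this yields
\begin{equation*}
\tilde{c}_{\mathbb{B}^n}(\sigma_a(x), \sigma_a(y)) = \sup_{p \in S^{n-1}} \frac{|x-y|\, |p - a^*|}{|x - a^*|\, |y - a^*| \left( \frac{|x-p|}{|x - a^*|} \vee \frac{|y-p|}{|y - a^*|} \right)}.
\end{equation*}

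The principal obstacle is that the maximum in the denominator does not split cleanly. To handle it I would establish the elementary two-sided inequality
\begin{equation*}
\frac{a_1 \vee a_2}{b_1 \vee b_2} \leq \frac{a_1}{b_1} \vee \frac{a_2}{b_2} \leq \frac{a_1 \vee a_2}{b_1 \wedge b_2} \qquad (a_i, b_i > 0)
\end{equation*}
and apply it with $a_i$ being the distances to $p$ and $b_i$ the distances to $a^*$. This sandwiches the whole expression between constant multiples of $\sup_p |x - y|/(|x-p| \vee |y-p|) = \tilde{c}_{\mathbb{B}^n}(x,y)$, with the constants governed by the ratio of $|p - a^*|$ against $|x - a^*| \vee |y - a^*|$ on one side and $|x - a^*| \wedge |y - a^*|$ on the other. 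Since $|a^*| = 1/|a|$, every $z \in \overline{\mathbb{B}^n}$ satisfies $\frac{1 - |a|}{|a|} \leq |z - a^*| \leq \frac{1 + |a|}{|a|}$, and substituting these extremal values collapses the remaining factor to exactly $\frac{1 - |a|}{1 + |a|}$ for the lower bound and $\frac{1 + |a|}{1 - |a|}$ for the upper bound, yielding both halves of the claimed inequality.
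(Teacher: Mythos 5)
Your proposal is correct, and its skeleton coincides with the paper's: reduce to $f=\sigma_a$ via Lemma \ref{distortion} (the paper uses $f\circ\sigma_a$ fixing $0$ in exactly the same way), invoke the inversion distortion identity $|\sigma_a(u)-\sigma_a(v)|=r^2|u-v|/(|u-a^*||v-a^*|)$, and finish with the bounds $(1-|a|)/|a|\leq|z-a^*|\leq(1+|a|)/|a|$ for $z\in\overline{\mathbb{B}^n}$. Where you genuinely diverge is in handling the maximum in the denominator. The paper fixes $p$, assumes $|x-p|\leq|y-p|$, and splits into two cases according to which of $|\sigma_a(x)-\sigma_a(p)|,|\sigma_a(y)-\sigma_a(p)|$ is larger, showing in each case that the ratio $Q(x,y,p)$ is at most $|p-a^*|/|x-a^*|\leq(1+|a|)/(1-|a|)$; it then obtains only the upper bound this way and deduces the lower bound by applying the upper bound to $f^{-1}=\sigma_a\circ\psi_2$. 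Your sandwich
\begin{equation*}
\frac{a_1\vee a_2}{b_1\vee b_2}\ \leq\ \frac{a_1}{b_1}\vee\frac{a_2}{b_2}\ \leq\ \frac{a_1\vee a_2}{b_1\wedge b_2}
\end{equation*}
(which is elementary and correct) replaces the case analysis and, because it is two-sided, delivers the lower bound in the same pass without any appeal to the inverse map; note that $\tfrac{|x-a^*|\vee|y-a^*|}{|x-a^*|\,|y-a^*|}=\tfrac{1}{|x-a^*|\wedge|y-a^*|}$ makes the constants collapse exactly as you claim. This is a cleaner and more symmetric organization of the same estimate. Two small points to tidy up: the case $a=0$ must be treated separately (as the paper does), since $\sigma_a$ is undefined there --- but then $f$ is orthogonal and the constants equal $1$; and you should state explicitly that the reparametrization $q=\sigma_a(p)$ is legitimate because $\sigma_a$ maps $S^{n-1}$ bijectively onto itself.
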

\begin{proof}
If $a=0$, according to \textbf{Lemma  \ref{distortion}} we know that $f$ is an orthogonal transformation and preserves the $\tilde{c}$ metric, i.e.,
for all $x,y \in \mathbb{B}^n$,
\begin{equation*}
\tilde{c}_{\mathbb{B}^n}(f(x),f(y))=\tilde{c}_{\mathbb{B}^n}(x,y),
\end{equation*}
thus the conclusion is obviously true.

If $a\neq0$, since $f\circ \sigma_a\in \mathscr{G}\kern -0.25em\mathscr{M}(\mathbb{B}^n)$ and $f(\sigma_a(0))=f(a)=0$,
by \textbf{Lemma \ref{distortion}}, we have $\psi_1\equiv f\circ \sigma_a$ is an orthogonal transformation.
Hence $f=\psi_1\circ {\sigma_a}^{-1}=\psi_1\circ \sigma_a$. Therefore for all $x, y \in \mathbb{B}^n$,
\begin{equation*}
\tilde{c}_{\mathbb{B}^n}(f(x),f(y))=\tilde{c}_{\mathbb{B}^n}(\sigma_a(x),\sigma_a(y)).
\end{equation*}
By the equality (\ref{eq}) we have
\begin{equation*}
|\sigma_a(x)-\sigma_a(y)|=\frac{r^2|x-y|}{|x-a^*||y-a^*|}.
\end{equation*}

For any $p\in \partial \mathbb{B}^n$, let
\begin{equation*}
Q(x,y,p)\equiv\left(\frac{|\sigma_a(x)-\sigma_a(y)|}{|\sigma_a(x)-\sigma_a(p)|\vee|\sigma_a(y)-\sigma_a(p)|}\right)\bigg/\left(\frac{|x-y|}{|x-p|\vee|y-p|}\right).
\end{equation*}

\medskip

Without loss of generality, we may assume that $|x-p|\leq|y-p|$.

First we prove the right-hand side inequality.

\textbf{Case 1.} $|\sigma_a(x)-\sigma_a(p)|\leq|\sigma_a(y)-\sigma_a(p)|$. Then
\begin{align}
Q(x,y,p)=\left(\frac{|\sigma_a(x)-\sigma_a(y)|}{|\sigma_a(y)-\sigma_a(p)|}\right)\bigg/\left(\frac{|x-y|}{|y-p|}\right)=\frac{|p-a^*|}{|x-a^*|}\nonumber.
\end{align}
By $|p-a^*|\leq1+\frac{1}{|a|}$ and $|x-a^*|\geq\frac{1}{|a|}-1$, we have
\begin{equation}
Q(x,y,p)\leq\frac{1+\frac{1}{|a|}}{\frac{1}{|a|}-1}=\frac{1+|a|}{1-|a|}. \nonumber
\end{equation}

\textbf{Case 2.} $|\sigma_a(x)-\sigma_a(p)|\geq|\sigma_a(y)-\sigma_a(p)|$, i.e., $\frac{|x-p|}{|x-a^*|}\geq\frac{|y-p|}{|y-a^*|}$. Then
\begin{align}
Q(x,y,p)=\left(\frac{|\sigma_a(x)-\sigma_a(y)|}{|\sigma_a(x)-\sigma_a(p)|}\right)\bigg/\left( \frac{|x-y|}{|y-p|}\right)&=\frac{|y-p||p-a^*|}{|y-a^*||x-p|}\nonumber\\
&\leq\frac{|x-p|}{|x-a^*|}\frac{|p-a^*|}{|x-p|}\nonumber\\
&=\frac{|p-a^*|}{|x-a^*|}\nonumber.
\end{align}
Moreover,
\begin{equation}
Q(x,y,p)\leq\frac{1+\frac{1}{|a|}}{\frac{1}{|a|}-1}=\frac{1+|a|}{1-|a|}. \nonumber
\end{equation}

Thus for $x,y\in\mathbb{B}^n$ and $p\in \partial \mathbb{B}^n$, we have
\begin{align}
\frac{|\sigma_a(x)-\sigma_a(y)|}{|\sigma_a(x)-\sigma_a(p)|\vee|\sigma_a(y)-\sigma_a(p)|}&\leq \frac{1+|a|}{1-|a|}\frac{|x-y|}{|x-p|\vee|y-p|}\nonumber\\
&\leq\frac{1+|a|}{1-|a|} \tilde{c}_{\mathbb{B}^n}(x,y).\nonumber
\end{align}
Due to the arbitrariness of $p$, we have
\begin{align}
\tilde{c}_{\mathbb{B}^n}(f(x),f(y))=\tilde{c}_{\mathbb{B}^n}(\sigma(x),\sigma(y))&=\frac{|\sigma_a(x)-\sigma_a(y)|}{\underset{p\in\partial \mathbb{B}^n}\inf\left(|\sigma_a(x)-\sigma_a(p)|\vee|\sigma_a(y)-\sigma_a(p)|\right)}\nonumber\\
&\leq\frac{1+|a|}{1-|a|} \tilde{c}_{\mathbb{B}^n}(x,y).\nonumber
\end{align}

\medskip

Next we prove the left-hand side inequality.

Since $\sigma_a\circ f \in  \mathscr{G}\kern -0.25em\mathscr{M}(\mathbb{B}^n)$ and $\sigma_a(f^{-1}0))=\sigma_a(a)=0$, by \textbf{Lemma \ref{distortion}}, we have $\psi_2\equiv \sigma_a\circ f^{-1}$ is an orthogonal transformation. Hence $f^{-1}= {\sigma_a}^{-1}\circ \psi_2=\sigma_a\circ \psi_2$. By the proof of the right-hand side inequality, for all $x,y\in\mathbb{B}^n$, we have
\begin{align*}
\tilde{c}_{\mathbb{B}^n}(f^{-1}(x),f^{-1}(y))&=\tilde{c}_{\mathbb{B}^n}(\sigma_a(\psi_2(x),\sigma_a(\psi_2(y)))\\
&\leq \frac{1+|a|}{1-|a|} \tilde{c}_{\mathbb{B}^n}(\psi_2(x),\psi_2(y))\\
&=\frac{1+|a|}{1-|a|} \tilde{c}_{\mathbb{B}^n}(x,y).
\end{align*}
Therefore
\begin{equation*}
\tilde{c}_{\mathbb{B}^n}(f(x),f(y))\geq \frac{1-|a|}{1+|a|} \tilde{c}_{\mathbb{B}^n}(x,y).
\end{equation*}

\medskip

Thus the proof is complete.
\end{proof}

\bigskip

\section{Quasiconformality of a Bilipschitz Mapping in $\tilde{c}$ Metric}

Bilipschitz mappings with respect to the triangular ratio metric have been studied in \cite[Theorem 4.4] {jw10}.
Due to the similarity between the definitions of triangular ratio metric and $\tilde{c}$ metric,
in this section we will investigate the quasiconformality of bilipschitz mappings in $\tilde{c}$ metric.

\medskip

\begin{theorem}
Let $D\subsetneq \mathbb{R}^n$ be an open set and let $f:D\to fD\subset \mathbb{R}^n$ be a sense-preserving homeomorphism, satisfying L-bilipschitz condition with respect to the $\tilde{c}$ metric, i.e.,
\begin{equation}\label{lip}
{\tilde{c}_D(x,y)}/{L}\leq\tilde{c}_{fD}(f(x),f(y))\leq L\tilde{c}_D(x,y)
\end{equation}
for all $x,y\in D$. Then $f$ is  quasiconformal with the linear dilatation $H(f)\leq L^2$.
\end{theorem}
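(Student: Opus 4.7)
The plan is to bound the linear dilatation
$$H(f,x) = \limsup_{r \to 0^+} \frac{L(x,r)}{l(x,r)}, \quad L(x,r) = \max_{|y-x|=r} |f(y)-f(x)|, \quad l(x,r) = \min_{|y-x|=r} |f(y)-f(x)|,$$
by $L^2$ at every $x \in D$; since $f$ is a sense-preserving homeomorphism, this yields quasiconformality with $H(f) \leq L^2$. The key observation is that, infinitesimally at $x$, the metric $\tilde{c}_D(x,\cdot)$ reduces to $|x-\cdot|/d(x)$, so the bilipschitz hypothesis on $\tilde{c}$ transfers directly into a linear distortion bound on $f$.

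First I would record the local asymptotic. By Lemma \ref{ctcb}(1),
$$\frac{|x-y|}{|x-y|+d_{min}} \leq \tilde{c}_D(x,y) \leq \frac{|x-y|}{d_{min}},$$
and since $d_{min}=d(x)\wedge d(y) \to d(x)$ as $y \to x$, both bounds give
$$\tilde{c}_D(x,y) = \frac{|x-y|}{d(x)}\bigl(1+o(1)\bigr) \text{ as } y \to x,$$
with the analogous statement at $f(x) \in fD$. Then I fix $x \in D$ and, for each small $r > 0$, choose $y_r, z_r$ on the sphere $|y-x|=r$ realizing $L(x,r)$ and $l(x,r)$ respectively. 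Continuity of $f$ forces $f(y_r), f(z_r) \to f(x)$ as $r \to 0$, so the asymptotic applies in $fD$ at each of these two image points. Combined with the bilipschitz inequalities \eqref{lip} on the pairs $(x,y_r)$ and $(x,z_r)$, this yields
$$\frac{L(x,r)}{d(f(x))}\bigl(1+o(1)\bigr) \leq \frac{L\,r}{d(x)}\bigl(1+o(1)\bigr), \qquad \frac{l(x,r)}{d(f(x))}\bigl(1+o(1)\bigr) \geq \frac{r}{L\,d(x)}\bigl(1+o(1)\bigr).$$
Dividing and letting $r \to 0$ produces $H(f,x) \leq L^2$, and taking the supremum over $x \in D$ completes the proof.

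The one technical point is ensuring the $o(1)$ error terms are controlled. In the source domain this is automatic, since on the sphere $|y-x|=r$ one has $d(y) \in [d(x)-r, d(x)+r]$, forcing $d_{min}\to d(x)$ uniformly in $y$. In the image domain no uniformity over the sphere is needed: the asymptotic is applied only at the two distinguished points $f(y_r)$ and $f(z_r)$, both of which tend to $f(x)$ by continuity of $f$, so each error term vanishes as $r \to 0$ and the ratio comparison above is legitimate. I expect the calculation to parallel the triangular ratio argument in Theorem 4.4 of \cite{jw10}, with the denominator $|x-p|+|y-p|$ replaced throughout by $|x-p|\vee|y-p|$.
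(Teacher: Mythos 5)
Your proposal is correct and follows essentially the same route as the paper: both arguments use Lemma \ref{ctcb}(1) to show that $\tilde{c}$ behaves like $|x-y|/d(x)$ near the diagonal (in both $D$ and $fD$), transfer the bilipschitz hypothesis into a bound on the ratio of image distances for points on a small sphere about $x$, and let the radius tend to $0$ to obtain $H(f)\le L^2$. The paper packages the limit via the explicit two-sided bounds $\frac{|x-y|}{m_D(x,y)+|x-y|}\le\tilde{c}_D(x,y)\le\frac{|x-y|}{m_D(x,y)-|x-y|}$ together with a ratio $A(x,y,z)\to 1$ rather than your $o(1)$ bookkeeping, but the content is the same.
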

\begin{proof}
Let $x,y\in D$ with $|x-y|< \min\{d(x), d(y)\}\equiv m_D(x,y)$.
Without loss of generality, we may assume that $d_{min}=d(x)$ and $d(x)=|x-w|, \, w\in\partial D$.
Therefore by \textbf{Lemma \ref{ctcb}(1)} we have
\begin{align}
m_D(x,y)-|x-y|\leq\underset{p\in\partial D}\inf(|x-p|\vee|y-p|) &\leq |x-y|+m_D(x,y). \nonumber
\end{align}
Hence
\begin{equation}\label{cfanwei}
\frac{|x-y|}{m_D(x,y)+|x-y|}\leq \tilde{c}_D(x,y) \leq \frac{|x-y|}{m_D(x,y)-|x-y|}.
\end{equation}
Since
\begin{equation*}
\frac{1}{\tilde{c}_D(x,y)}=\frac{\underset{p\in\partial D}\inf(|x-p|\vee|y-p|)}{|x-y|}\geq \frac{d(x)\vee d(y)}{|x-y|}\geq \frac{m_D(x,y)}{|x-y|}> 1,
\end{equation*}
we  deduce from inequality (\ref{cfanwei}) that
\begin{equation}\label{xyfanwei}
\frac{m_D(x,y)}{\frac{1}{\tilde{c}_D(x,y)}+1}\leq |x-y| \leq \frac{m_D(x,y)}{\frac{1}{\tilde{c}_D(x,y)}-1}.
\end{equation}

For any $z\in D$, let $x,y\in D$ such that $|x-z|=|y-z|=r$, $r$ is small enough to make the following discussion meaningful.
Let
\begin{equation*}
A(x,y,z)=\frac{m_{fD}(f(x),f(z))}{m_{fD}(f(y),f(z))},
\end{equation*}
and $A(x,y,z)\to 1$ as $x, y\to z$.

As $r\to 0$, according to equations (\ref{lip})-(\ref{xyfanwei}), we obtain

\begin{align}
\frac{|f(x)-f(z)|}{|f(y)-f(z)|}&\leq A(x,y,z) \frac{\frac{1}{\tilde{c}_{fD}(f(y),f(z))}+1}{\frac{1}{\tilde{c}_{fD}(f(x),f(z))}-1}\nonumber\\
&\leq A(x,y,z) \frac{\frac{L}{\tilde{c}_{D}(y,z)}+1}{\frac{1}{L\tilde{c}_{D}(x,z)}-1}\nonumber\\
&\leq A(x,y,z) \frac{\frac{L(m_D(y,z)+|y-z|)}{|y-z|}+1}{\frac{m_D(x,z)-|x-z|}{L|x-z|}-1}\nonumber\\
&=A(x,y,z) \frac{|x-z|}{|y-z|} \frac{L^2m_D(y,z)+(L^2+L)|y-z|}{m_D(x,z)-(L+1)|x-z|}\nonumber\\
&\to L^2. \nonumber
\end{align}
Then,
\begin{equation}
H(f(z))=\underset{|x-z|=|y-z|=r\to0^+} \limsup\frac{|f(x)-f(z)|}{|f(y)-f(z)|}\leq L^2.\nonumber
\end{equation}

\medskip

Thus the proof is complete.
\end{proof}

\medskip

{\bf Acknowledgements.}
This research was supported by National Natural Science Foundation of China
(NNSFC) under Grant Nos. 11771400 and 11601485.

\medskip



\end{document}